\documentclass[reqno]{amsart}
\usepackage{amsmath, amsthm, amssymb, mathabx}
%
%
%
%
\begin{document}
\newtheorem{theorem}{Theorem}[section]
\newtheorem{corollary}{Corollary}
\newtheorem*{main}{Main Theorem}
\newtheorem{lemma}[theorem]{Lemma}
\newtheorem{proposition}{Proposition}
\newtheorem{conjecture}{Conjecture}
\newtheorem*{problem}{Problem}
\theoremstyle{definition}
\newtheorem{definition}[theorem]{Definition}
\newtheorem{remark}{Remark}
\newtheorem*{notation}{Notation}
\newcommand{\ep}{\varepsilon}
\newcommand{\eps}[1]{{#1}_{\varepsilon}}

\numberwithin{equation}{section}

%
%
\subjclass[2010]{Primary: 35J20, 35J47 ; Secondary: 35B33, 35J25, 35J50, 35J65}
 \keywords{Schr\"{o}dinger-Kirchhoff-Poisson system, Mountain pass theorem, Variational methods, Signed solution, Sign-changing solution. }
 \thanks{}
\title[Schr\"{o}dinger-Kirchhoff-Poisson type systems]{Schr\"{o}dinger-Kirchhoff-Poisson type systems}

\author[C. J. Batkam]{Cyril Joel Batkam}
\address{Cyril Joel Batkam
 \newline
The Fields Institute,
\newline
Toronto, Ontario, M5T 3J1, CANADA.}
\email{cbatkam@fields.utoronto.ca, cyrilbatkam@gmail.com}
\author[J.R. Santos J\'unior]{Jo\~ao R. Santos J\'unior}
\address{Jo\~ao R. Santos J\'unior
 \newline
Universidade Federal do Par\'a,
\newline
Faculdade de Matem\'atica
\newline
CEP 66075-110, Bel\'em, Par\'a, BRAZIL.}
\email{joaojunior@ufpa.br, joaorodrigues\_mat@hotmail.com}

\maketitle
\begin{abstract}
In this article we study the existence of solutions to the system
\begin{equation*}
  \left\{
      \begin{array}{ll}
      -\left(a+b\int_{\Omega}|\nabla u|^{2}\right)\Delta u +\phi u= f(x, u)   &\text{in }\Omega \hbox{} \\
       -\Delta \phi= u^{2} &\text{in }\Omega  \hbox{} \\
         u=\phi=0&\text{on }\partial\Omega,  \hbox{} 
      \end{array}
    \right.
\end{equation*}
where  $\Omega$ is a bounded smooth domain of $\mathbb{R}^N$ ($N=1,2$ or $3$), $a>0$, $b\geq0$, and $f:\overline{\Omega}\times \mathbb{R}\to\mathbb{R}$ is a continuous function which is $3$-superlinear. By using some variants of the mountain pass theorem established in this paper, we show the existence of three solutions: one positive, one negative, and one which changes its sign. Furthermore, in case $f$ is odd with respect to $u$ we obtain an unbounded sequence of sign-changing solutions.
\end{abstract}

\section{Introduction}

In this article, we study the existence of signed and sign-changing solutions to the following Schr\"{o}dinger-Kirchhoff-Poisson system
\begin{equation}\label{1}\tag{SKP}
 \left\{
      \begin{array}{ll}
      -\left(a+b\int_{\Omega}|\nabla u|^{2}\right)\Delta u +\phi u= f(x, u)   &\text{in }\Omega \hbox{} \\
       -\Delta \phi= u^{2} &\text{in }\Omega  \hbox{} \\
         u=\phi=0&\text{on }\partial\Omega,  \hbox{} 
      \end{array}
    \right.
\end{equation}
where  $\Omega$ is a bounded smooth domain of $\mathbb{R}^N$ with $N=1,2$ or $3$; $f$ is a continuous function satisfying some conditions we will precise later, $a>0$ and $b\geq 0$.

When $a=1$ and $b=0$, \eqref{1} reduces to the boundary value problem
\begin{equation}\label{SP}
\left \{ \begin{array}{ll}
-\Delta u +\phi u= f(x, u) & \mbox{in $\Omega$,}\\
-\Delta \phi= u^{2} & \mbox{in $\Omega$,}\\
u=\phi=0 & \mbox{on $\partial\Omega$.}
\end{array}\right.
\end{equation}
Knowledge of the solutions of system \eqref{SP} is relevant in the study of stationary solutions $\psi(x, t)=e^{-it}u(x)$ to the nonlinear parabolic Schr\"{o}dinger-Poisson system
\begin{equation}\label{PSP}
\left \{ \begin{array}{ll}
-i\frac{\partial\psi}{\partial t}=-\Delta \psi +\phi(x)\psi-|\psi|^{p-2}\psi & \mbox{in $\Omega$,}\\
-\Delta \phi= |\psi|^{2} & \mbox{in $\Omega$,}\\
\psi=\phi=0 & \mbox{on $\partial\Omega$.}
\end{array}\right.
\end{equation}
The first equation in \eqref{PSP}, called Schr\"{o}dinger equation, describes quantum (non-relativistic) particles interacting with the eletromagnetic field generated by the motion. An interesting class of Schr\"{o}dinger equations is when the potential $\phi(x)$ is determined by the charge of wave function itself, that is, when the second equation in \eqref{PSP} (Poisson equation) holds. For more details about the physical relevance of the Schr\"{o}dinger-Poisson system, we refer to \cite{AR,BF,Ruiz}.

System \eqref{SP} has been extensively studied after the seminal work of Benci and Fortunato \cite{BF}. Many important results concerning existence and non existence of solutions, multiplicity of solutions, least energy solutions, radial and non radial solutions, semiclassical limit and concentrations of solution have been obtained. See for instance \cite{AS,AR,AP,Batkam1,CV,Coclite,RS} and the references therein.

On the other hand, considering just the first equation in \eqref{1} with the potential equal to zero, we have the problem
\begin{equation}\label{KP}
\left \{ \begin{array}{ll}
-\left(a+b\int_{\Omega}|\nabla u|^{2}\right)\Delta u= f(x, u) & \mbox{in $\Omega$,}\\
u=0 & \mbox{on $\partial\Omega$,}
\end{array}\right.
\end{equation}
which represents the stationary and N-dimensional version of the Kirchhoff model \cite{kirchhoff} for small transverse vibrations of an elastic string by considering the effect of the changing in the length during the vibrations. In fact, since the length of the string is variable during the vibrations, the tension changes with the time and depends of the $L^{2}$ norm of the gradient of the displacement $u$. More precisely, we have $a=P_{0}/h$ and $b=E/2L$, where $L$ is the length of the string, $h$ is the area of cross-section, $E$ is the Young modulus of the material and $P_{0}$ is the initial tension. Problem \eqref{KP} is called nonlocal because of the presence of the term $\int_{\Omega}|\nabla u|^{2}dx$ which implies, when $b\neq0$, that the equation in \eqref{KP} is no longer a pointwise identity. This phenomenon causes some mathematical difficulties which make the study of such class of problems particularly interesting. Some existence and multiplicity results on Kirchhoff type problems can be found in \cite{alvescorreama05,Anelo1,Anelo2,Batkam1,MR,ma} and the references therein.


An important fact about system \eqref{1} is that it can be converted into a bi-nonlocal problem of the Schr\"{o}dinger-Kirchhoff type. More precisely, by using standard arguments as those in \cite{BF}, one can show that $(u,\phi)\in H_0^1(\Omega)\times H_0^1(\Omega)$ is a weak solution of \eqref{1} if, and only if, $\phi=\phi_u$ and $u$ is a weak
solution to the following Schr\"{o}dinger-Kirchhoff type system
\begin{equation}\label{2}\tag{SK}
\left \{ \begin{array}{ll}
-\left(a+b\int_{\Omega}|\nabla u|^{2}\right)\Delta u +\phi_{u} u= f(x, u)  & \mbox{in $\Omega$,}\\
u=0 & \mbox{on $\partial\Omega$},
\end{array}\right.
\end{equation}
where $\phi_u$ is the unique element of $H_0^1(\Omega)$, given by the Lax Milgram Theorem, such that $-\Delta \phi_{u}= u^{2} $. We will then concentrate our efforts in the study of \eqref{2}.

In recent years, Schr\"{o}dinger-Kirchhoff problems like \eqref{2}, with $\phi=\phi(x)$ depending only on $x$, have received great attention of the mathematical community. In \cite{HZ}, by using Lusternik-Schnirelmann theory and minimax methods,  He and Zou  proved a result of multiplicity and concentration behavior of positive solutions for the following equation
\begin{equation}\label{HZ}
\left \{ \begin{array}{ll}
-\left(\varepsilon^{2}a+b\varepsilon\int_{\mathbb{R}^3}|\nabla u|^{2}\right)\Delta u + V(x)u=f(u) & \mbox{in $\mathbb{R}^3$}\\
u \in H^{1}(\mathbb{R}^3),
\end{array}\right.
\end{equation}
by assuming, among others, that $f\in C^{1}(\mathbb{R}^3)$ is subcritical and 3-superlinear, and that the potential $V$ satisfies:
\begin{equation*}
V_{\infty}=\liminf_{|x|\to
\infty}V(x)>V_{0}=\inf_{\mathbb{R}^3}V(x)>0.
\end{equation*}
In \cite{WTXZ}, Wang et al. replaced the second member of \eqref{HZ} by $\lambda f(u)+u^5$ and they obtained, assuming only that $f$ is continuous, multiple positive solutions when the parameter $\lambda>0$ is large enough. Their approach combines  Lusternik-Schnirelmann theory, minimax methods, and the Nehari manifold method. More recently,  Figueiredo and Santos J\'{u}nior \cite{FSJ} obtained, using penalization method and the Nehari manifold approach, a multiplicity and concentration result of positive solutions for the problem
\begin{equation*}
\left \{ \begin{array}{ll}
-M\left(\frac{1}{\varepsilon}\int_{\mathbb{R}^3}|\nabla u|^{2}+\frac{1}{\varepsilon^{3}}\int_{\mathbb{R}^3}V(x)u^{2}\right)\left[-\varepsilon^{2}\Delta u + V(x)u \right]=f(u) & \mbox{in $\mathbb{R}^3$},\\
u \in H^{1}(\mathbb{R}^3),
\end{array}\right.
\end{equation*}
with $f$ 3-superlinear and only continuous, $M$ a general continuous function, and $V$ satisfying the condition: {\em for each $\delta>0$ there is a bounded and Lipschitz domain $\Omega \subset \mathbb{R}^3$ such that $V_{0}< \min_{\partial \Omega}V$, with $\Pi=\{x\in \Omega: V(x)=V_{0}\}\neq \emptyset$ and $\Pi_{\delta}=\{ x\in \mathbb{R}^3: dist(x, \Pi)\leq \delta\}\subset \Omega$.} We also refer to \cite{FISJ, LLS, NW, Wu} for related results.

Motivated by the previous work, we study the existence of solutions to the system \eqref{1} or, equivalently, to the system  \eqref{2}. As far as we know, this is the first paper to investigate a bi-nonlocal problem of this type. We emphasize that the combined effects of the two nonlocal terms it contains make  problem \eqref{2} an interesting variational problem. We will have to circumvent some new difficulties in order to decide the sign of the solutions.
 To enunciate our main result, we first require some conditions on the nonlinear term $f$:
\begin{enumerate}
  \item[$(f_1)$] $f\in C\big(\overline{\Omega}\times\mathbb{R},\mathbb{R}\big)$ and there exists a constant $c>0$ such that
\begin{equation*}
    |f(x,t)|\leq c\big(1+|t|^{p-1}\big),\quad\text{where }4<p<6;
\end{equation*}
  \item[$(f_2)$] $f(x,t)=\circ(|t|)$, uniformly in $x\in \overline{\Omega}$, as $u\to0$;
  \vspace{0.3cm}
  \item[$(f_3)$] there exists $\mu>4$ such that $0<\mu F(x,t)\leq tf(x,t)$ for all $t\neq0$ and for all $x\in\overline{\Omega}$, where $F(x,t)=\int_0^t f(x,s)ds$.
 \end{enumerate}
We say that the couple $(u,\phi)$ is a sign-changing solution of \eqref{1} if $u$ changes its sign. 
 Our main result reads as follows:
  \begin{main}\label{result1}
Let $a>0$ and $b\geq0$.  If $(f_{1})-(f_3)$ hold, then problem \eqref{1} has at least three solutions: one positive, one negative, and one sign-changing. If moreover $f$ is odd with respect to its second variable, then problem \eqref{1} has infinitely many sign-changing solutions.
 \end{main}
Our approach in proving this theorem is variational and relies on the application of three critical point theorems. The first one is a new version of the mountain pass theorem established in this paper. More precisely, using the quantitave deformation lemma introduced in \cite{Batkam1}, we derive a variant of the mountain pass theorem on cones which yields positive and negative Palais-Smale sequences for the energy functional associated to \eqref{2}. The second critical point theorem is a sign-changing version of the mountain pass theorem, also established in this paper, which guarantees the existence of a sign-changing solution of mountain pass type to \eqref{2}. The main feature of this result is a new characterization of the mountain pass level introduced recently in \cite{LiuLiuWang15}. However, we point out that the critical point theorem in \cite{LiuLiuWang15} cannot be used in our situation because the auxiliary operator constructed in Section \ref{section3} is not compact if it is defined on an infinite dimensional vector space. Finally, the third critical point theorem  is a version of the symmetric mountain pass theorem, established recently in \cite{Batkam1}, which will be used, in case $f$ is odd in $u$, to ensure the existence of infinitely many high energy sign-changing solutions to \eqref{2}.

 The paper is organized as follows: In Section \ref{section2}, we state and prove the abstract results. In Section \ref{section3}, we provide the proof of Theorem \ref{result1}, which is divided into three parts.

\setcounter{equation}{0}
\section{Critical point theorems}\label{section2}
In this section, we provide some critical point theorems which are interesting by themselves and can be used in many other situations.

 Let $J$ be a $C^1$-functional defined on a Hilbert space $X$ of the form
\begin{equation}\label{space}
X:=\overline{\oplus_{j=1}^\infty X_j},\quad\text{with } \dim X_j<\infty.
\end{equation}
We introduce for $m>2$ the following notations:
\begin{equation*}
Y_m:=\oplus_{j=1}^m X_j,\quad J_m:=J|_{Y_m},
\end{equation*}
\begin{equation*}
K_m:=\big\{u\in Y_m\,;\, J'_m(u)=0\big\}\quad\text{and}\quad E_m:=Y_m\backslash K_m.
\end{equation*}
Let $P_m$ be a closed convex cone of $Y_m$. We set for $\mu_m>0$
\begin{equation*}
\pm D_m^0:=\big\{u\in Y_m\,|\, dist\big(u,\pm P_m\big)<\mu_m\big\}.
\end{equation*}
We will also denote the $\alpha$-neighborhood of $W\subset Y_m$ by
\begin{equation*}
V_\alpha(W):=\big\{u\in Y_m\,|\, dist(u,W)\leq\alpha\big\},\quad\forall\alpha>0.
\end{equation*}
\par We consider the following situation:
\begin{enumerate}
\item[$(A_0)$] There exists a locally Lipschitz continuous vector field $B:E_m\to Y_m$ \big($B$ odd if $J$ is even\big) such that:
\begin{itemize}
\item[(i)] $B\big((\pm D_m^0)\cap E_m\big)\subset \pm D_m^0$;
\item[(ii)] there exists a constant $\alpha_1>0$ such that $\big<J'_m(u),u-B(u)\big>\geq\alpha_1\|u-B(u)\|^2$, for any $u\in E_m$;
\item[(iii)] for $\rho_{1}<\rho_{2}$ and $\alpha>0$, there exists $\beta>0$ such that $\|u-B(u)\|\geq\beta$ if $u\in Y_m$ is such that $J_m(u)\in[\rho_{1},\rho_{2}]$ and $\|J'_m(u)\|\geq\alpha$.
\end{itemize}
\end{enumerate}

We have the following quantitative deformation lemma.
\begin{lemma}[see \cite{Batkam1}, Lemma $2.3$]\label{deformationlemma}
Let $J\in C^1(X,\mathbb{R})$. Assume that for all $m>2$ there exists $\mu_m>0$ such that the condition $(A_0)$ is satisfied. Let $c\in\mathbb{R}$, $\varepsilon_0>0$ and $W\subset Y_m$ \big(with $-W=W$ if $J$ is even\big) such that
\begin{equation}\label{one}
\forall u\in J_m^{-1}\big([c-2\varepsilon_0,c+2\varepsilon_0]\big)\cap V_{\frac{\mu_m}{2}}(W)\,:\, \|J_m'(u)\|\geq\varepsilon_0.
\end{equation}
Then for some $\varepsilon\in]0,\varepsilon_0[$ there exists $\eta\in C\big([0,1]\times Y_m,Y_m\big)$ such that:
\begin{enumerate}
\item[(i)] $\eta(t,u)=u$ for $t=0$ or $u\notin J_m^{-1}\big([c-2\varepsilon,c+2\varepsilon]\big)$;
\item[(ii)] $\eta\big(1,J_m^{-1}(]-\infty,c+\varepsilon])\cap W\big)\subset J_m^{-1}\big(]-\infty,c-\varepsilon]\big)$;
\item[(iii)] $J_m\big(\eta(\cdot,u)\big)$ is not increasing, for any $u$;
\item[(iv)] $\eta\big([0,1]\times (\pm D^0_m)\big)\subset \pm D^0_m$;
\item[(v)] If $J$ is even then $\eta(t,\cdot)$ is odd, for any $t\in[0,1]$.
\end{enumerate} 
\end{lemma}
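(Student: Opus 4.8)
The plan is to run the classical deformation argument, but with the vector field $V:=\mathrm{id}-B$ playing the role of a pseudo-gradient. Since $Y_m$ is finite dimensional, $V$ is locally Lipschitz on the open set $E_m=Y_m\setminus K_m$, the finite-dimensionality will guarantee that the flow I build below is globally defined, and hypotheses $(A_0)(ii)$--$(iii)$ are exactly what is needed to turn $-V$ into a usable descent direction. Concretely, first I would apply $(A_0)(iii)$ with $\rho_1=c-2\varepsilon_0$, $\rho_2=c+2\varepsilon_0$, $\alpha=\varepsilon_0$ to obtain $\beta>0$ such that $\|V(u)\|\geq\beta$ whenever $J_m(u)\in[c-2\varepsilon_0,c+2\varepsilon_0]$ and $\|J_m'(u)\|\geq\varepsilon_0$; by \eqref{one} both conditions hold on the closed set $\mathcal N:=J_m^{-1}\big([c-2\varepsilon_0,c+2\varepsilon_0]\big)\cap V_{\mu_m/2}(W)$, which therefore lies in $E_m$ and on which $\|V\|\geq\beta$. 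Also $(A_0)(ii)$ gives $\langle J_m'(u),V(u)\rangle\geq\alpha_1\|V(u)\|^2\geq0$ on all of $E_m$.

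Next I would fix $\varepsilon\in(0,\varepsilon_0)$ with $2\varepsilon/\alpha_1<\beta\mu_m/4$, set $\delta:=2\varepsilon/\alpha_1$, and choose Lipschitz cut-offs $g_1:Y_m\to[0,1]$, $g_1\equiv1$ on $V_{\mu_m/4}(W)$ and $g_1\equiv0$ off $\{\mathrm{dist}(\cdot,W)<\mu_m/3\}$, and $g_2:\mathbb R\to[0,1]$, $g_2\equiv1$ on $[c-\varepsilon,c+\varepsilon]$ and $g_2\equiv0$ off $(c-2\varepsilon,c+2\varepsilon)$. With $\chi:=g_1\cdot(g_2\circ J_m)$ one has $\{\chi>0\}\subset\mathcal N$, so $f:=-\delta\,\chi\,V/\|V\|^2$ on $\{\chi>0\}$ and $f:=0$ elsewhere defines, using $\|V\|\geq\beta$ on $\{\chi>0\}$, the local Lipschitz continuity of $V$ on $E_m\supset\mathrm{supp}\,\chi$ and a routine gluing estimate near $\partial\{\chi>0\}$, a locally Lipschitz vector field on $Y_m$ with $\|f\|\leq\delta/\beta$; hence $\partial_t\eta=f(\eta)$, $\eta(0,\cdot)=\mathrm{id}$, has a solution $\eta\in C([0,1]\times Y_m,Y_m)$. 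Along the flow $\frac{d}{dt}J_m(\eta)=-\delta\,\chi(\eta)\,\langle J_m'(\eta),V(\eta)\rangle/\|V(\eta)\|^2\leq-\delta\alpha_1\chi(\eta)\leq0$, which is (iii); if $J_m(u)\notin[c-2\varepsilon,c+2\varepsilon]$ then $\chi(u)=0$, so the constant curve $u$ solves the equation and by uniqueness $\eta(\cdot,u)\equiv u$, which is (i). If $J$ is even, then $\chi$ is even (because $\mathrm{dist}(\cdot,W)$ and $\|J_m'(\cdot)\|$ are even and $-W=W$) and $V$ is odd, so $f$ is odd and uniqueness gives $\eta(t,-u)=-\eta(t,u)$, i.e. (v). For (ii), suppose $u\in W$ satisfies $J_m(u)\leq c+\varepsilon$ but $J_m(\eta(1,u))>c-\varepsilon$; then monotonicity forces $J_m(\eta(t,u))\in[c-\varepsilon,c+\varepsilon]$ on $[0,1]$, while $\mathrm{dist}(\eta(t,u),W)\leq\|\eta(t,u)-u\|\leq\delta/\beta<\mu_m/4$, so $\chi(\eta(t,u))=1$ throughout, hence $J_m(\eta(1,u))\leq J_m(u)-\delta\alpha_1\leq c+\varepsilon-2\varepsilon=c-\varepsilon$, a contradiction.

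The hard part will be (iv), the invariance of $\pm D_m^0$, and this is the only step where the cone structure genuinely enters. Fixing the $+$ sign, writing $P:=P_m$ and, for $u\in D_m^0$, $d(t):=\mathrm{dist}(\eta(t,u),P)$ (so $d(0)<\mu_m$), I would use that $x\mapsto\tfrac12\mathrm{dist}(x,P)^2$ is $C^1$ with gradient $x-\Pi_P(x)$, $\Pi_P$ the metric projection onto the closed convex set $P$, to get
\begin{equation*}
\frac{d}{dt}\Big(\tfrac12 d(t)^2\Big)=-\lambda(t)\,\big\langle x-p,\;x-B(x)\big\rangle,\qquad x=\eta(t,u),\quad p=\Pi_P(x),\quad\lambda(t)=\frac{\delta\,\chi(x)}{\|V(x)\|^2}\geq0.
\end{equation*}
Decomposing $x-B(x)=(x-p)+(p-\Pi_P(B(x)))+(\Pi_P(B(x))-B(x))$ and using the variational inequality $\langle x-p,y-p\rangle\leq0$ for $y\in P$ (with $y=\Pi_P(B(x))$) gives $\langle x-p,x-B(x)\rangle\geq d(t)^2-d(t)\,\mathrm{dist}(B(x),P)$; and while $d(t)<\mu_m$ one has $x\in D_m^0$, with $x\in E_m$ on $\{\chi>0\}$, so $(A_0)(i)$ forces $\mathrm{dist}(B(x),P)<\mu_m$, whence $\frac{d}{dt}\big(\tfrac12 d^2\big)\leq\lambda\,d\,(\mu_m-d)$, i.e. $d'\leq\lambda(\mu_m-d)$ wherever $d>0$. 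A Gr\"onwall estimate then gives $d(t)\leq e^{-\int_0^t\lambda}\,d(0)+\mu_m\big(1-e^{-\int_0^t\lambda}\big)<\mu_m$ for all $t\in[0,1]$, so $\eta(t,u)\in D_m^0$; the identical computation with $-P$ in place of $P$ handles $-D_m^0$. The routine ingredients --- constructing the cut-offs, the gluing estimate for $f$, continuous dependence of $\eta$ on $u$, and the minor care at instants where $d(t)=0$ (there $\tfrac12 d^2$ is still $C^1$ and the differential inequality degenerates harmlessly) --- I would only sketch; the real content is closing the Gr\"onwall inequality in (iv) by combining the $C^1$-regularity of the squared distance to the cone, the variational inequality for $\Pi_P$, and $(A_0)(i)$ in the right order.
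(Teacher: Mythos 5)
The paper does not actually prove this lemma --- it quotes it from \cite{Batkam1}, Lemma $2.3$ --- but your argument is a correct, self-contained reconstruction of the standard proof behind that reference: the flow of a truncated pseudo-gradient field built from $B$, with $(A_0)(ii)$--$(iii)$ supplying the lower bound $\|u-B(u)\|\geq\beta$ and the descent rate needed for conclusions $(i)$--$(iii)$ and $(v)$, and $(A_0)(i)$ together with the convexity of $P_m$ giving the invariance $(iv)$. Your Gr\"onwall treatment of $(iv)$ via the $C^1$ function $\tfrac12 dist(\cdot,P_m)^2$ and the variational inequality for the metric projection is only a slightly slicker packaging of the usual convexity estimate $dist\big((1-s)x+sB(x),P_m\big)\leq(1-s)\,dist(x,P_m)+s\,dist(B(x),P_m)$, and every step (the choice $\delta=2\varepsilon/\alpha_1<\beta\mu_m/4$, the containment $\{\chi>0\}\subset\mathcal N\subset E_m$, the bootstrap closing the strict inequality $d(t)<\mu_m$) checks out.
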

To find positive and negative solutions we will use the following version of the mountain pass theorem in cones.
\begin{theorem}\label{mountain}
Let $J\in C^1(X,\mathbb{R})$. Assume that for any $m>2$ there exists $\mu_m>0$ such that $(A_0)$ is satisfied. Assume also that there exist $e^\pm\in \pm P_2$ and $r>0$ such that 
\begin{equation*}
(A_1) \qquad \|e^\pm\|>r\quad\text{and}\quad \rho:=\inf_{\substack{u\in X\\ \|u\|=r}}J(u)>\delta:=\max\{J(0),J(e^\pm)\}.
\end{equation*}
Then there exist sequences $\{u^{\pm}_{m,n}\}_n\subset \overline{\pm D^0_m}$ such that 
\begin{equation*}
\lim_{\substack{n\to\infty}}J'_m(u^{\pm}_{m,n})=0\,\,\text{ and }\,\, \lim_{\substack{n\to\infty}}J(u^{\pm}_{m,n})\in \big[\rho,\max_{t\in[0,1]}J(te^\pm)\big].
\end{equation*}
\end{theorem}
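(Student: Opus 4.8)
Fix $m>2$; by symmetry (replace $P_m$ by $-P_m$ and $e^+$ by $e^-$) it suffices to treat the ``$+$'' part. The plan is to run a mountain pass minimax \emph{confined to the cone neighborhood} $\overline{D_m^0}$ and to extract the Palais--Smale sequence from the quantitative deformation Lemma \ref{deformationlemma}. Since $e^+\in P_2\subseteq P_m$ and $P_m$ is a convex cone containing $0$, the segment $[0,e^+]$ lies in $P_m\subset\overline{D_m^0}$, so the class
\[
\Gamma_m^+:=\big\{\gamma\in C\big([0,1],\overline{D_m^0}\big)\ :\ \gamma(0)=0,\ \gamma(1)=e^+\big\}
\]
is nonempty (it contains $t\mapsto te^+$). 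Put $c_m^+:=\inf_{\gamma\in\Gamma_m^+}\max_{t\in[0,1]}J(\gamma(t))$, and note $J=J_m$ on $Y_m\supset\overline{D_m^0}$. The first step is to pin down $c_m^+$: for any $\gamma\in\Gamma_m^+$ the function $t\mapsto\|\gamma(t)\|$ is continuous and runs from $0$ to $\|e^+\|>r$, hence equals $r$ at some $t_0$, so $J(\gamma(t_0))\ge\rho$ by $(A_1)$ and thus $\max_tJ(\gamma(t))\ge\rho$, giving $c_m^+\ge\rho$; testing with $t\mapsto te^+$ gives $c_m^+\le\max_{t\in[0,1]}J(te^+)$. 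Hence $c_m^+\in[\rho,\max_{t\in[0,1]}J(te^+)]$ and, in particular, $c_m^+\ge\rho>\delta\ge\max\{J(0),J(e^+)\}$.

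Next I argue by contradiction that $J_m$ admits a Palais--Smale sequence at level $c_m^+$ lying in $\overline{D_m^0}$. If not, there is $\varepsilon_0>0$, which I shrink so that also $c_m^+-2\varepsilon_0>\delta$, with $\|J_m'(u)\|\ge\varepsilon_0$ for every $u\in J_m^{-1}\big([c_m^+-2\varepsilon_0,c_m^++2\varepsilon_0]\big)\cap V_{\mu_m/2}(\overline{D_m^0})$; that is, hypothesis \eqref{one} of Lemma \ref{deformationlemma} holds with $c=c_m^+$, this $\varepsilon_0$, and $W=\overline{D_m^0}$. The lemma yields $\varepsilon\in(0,\varepsilon_0)$ and $\eta\in C\big([0,1]\times Y_m,Y_m\big)$ satisfying (i)--(iv). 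Pick $\gamma\in\Gamma_m^+$ with $\max_tJ(\gamma(t))\le c_m^++\varepsilon$ and set $\widetilde\gamma:=\eta(1,\gamma(\cdot))$. Since $J(0),J(e^+)\le\delta<c_m^+-2\varepsilon$, property (i) forces $\eta(1,\cdot)$ to fix $0$ and $e^+$, so $\widetilde\gamma(0)=0$ and $\widetilde\gamma(1)=e^+$; since $\gamma$ has values in $\overline{D_m^0}$ and $\eta([0,1]\times D_m^0)\subset D_m^0$ by (iv), continuity gives $\eta([0,1]\times\overline{D_m^0})\subset\overline{D_m^0}$, so $\widetilde\gamma$ has values in $\overline{D_m^0}$ as well. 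Hence $\widetilde\gamma\in\Gamma_m^+$. As $\gamma([0,1])\subset J_m^{-1}\big(]-\infty,c_m^++\varepsilon]\big)\cap W$, property (ii) gives $\max_tJ(\widetilde\gamma(t))\le c_m^+-\varepsilon<c_m^+$, contradicting the definition of $c_m^+$. This contradiction produces $\{u_{m,n}^+\}_n\subset\overline{D_m^0}$ with $J_m'(u_{m,n}^+)\to0$ and $J(u_{m,n}^+)\to c_m^+\in[\rho,\max_{t\in[0,1]}J(te^+)]$; the ``$-$'' case follows verbatim with $-P_m$ and $e^-$.

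Everything above is routine once Lemma \ref{deformationlemma} is in hand; the delicate point — and the reason $(A_0)$ is phrased as it is — is that the downward deformation of a near-optimal path must not escape the cone neighborhood $\overline{D_m^0}$. This is exactly property (iv) of the deformation, which rests on the cone invariance (i), the angle condition (ii) and the nondegeneracy (iii) in $(A_0)$ for the pseudo-gradient-type field $B$; correspondingly, the mountain pass separation in $(A_1)$ is precisely what keeps $0$ and $e^\pm$ outside the strip $J_m^{-1}([c_m^+-2\varepsilon,c_m^++2\varepsilon])$, so that $\eta$ fixes them and the deformed path remains admissible. Two further remarks: no compactness or $(PS)$-type condition is used here, since we only claim the \emph{existence of Palais--Smale sequences} — the compactness is recovered in Section \ref{section3} on letting $m\to\infty$; and one should keep in mind that \eqref{one} is verified on the slightly enlarged set $V_{\mu_m/2}(\overline{D_m^0})$, which is where the extracted sequence is, strictly speaking, located.
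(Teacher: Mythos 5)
Your argument is the same as the paper's: the minimax class $\Gamma_m^\pm$ of paths in $\overline{\pm D_m^0}$ joining $0$ to $e^\pm$, the lower bound $c_m^\pm\ge\rho$ via the sphere-crossing argument, and a contradiction through Lemma \ref{deformationlemma} in which a near-optimal path is pushed below the level $c_m^\pm$ while remaining admissible (your verification that $\eta(1,\cdot)$ fixes $0$ and $e^\pm$ and that $\eta$ preserves $\overline{D_m^0}$ is the content of the paper's ``it is not difficult to show that $\theta\in\Gamma_m^\pm$''). So the proof is essentially correct and essentially identical in structure.

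The one point where you deviate, and which you should fix, is the choice of $W$ in Lemma \ref{deformationlemma}. You take $W=\overline{D_m^0}$, so hypothesis \eqref{one} must be checked on $V_{\mu_m/2}(\overline{D_m^0})$, and consequently the negation of your contradiction hypothesis only produces almost-critical points in the \emph{enlarged} set $V_{\mu_m/2}(\overline{D_m^0})\supsetneq\overline{D_m^0}$ --- you acknowledge this in your closing remark, but it means your argument delivers a Palais--Smale sequence in $\{u:\mathrm{dist}(u,P_m)\le 3\mu_m/2\}$ rather than in $\overline{D_m^0}$ as the theorem asserts (and as Part 1 of the main proof needs, since the localization $\mathrm{dist}(u_m^\pm,\pm P_m)\le\mu_m\to 0$ is exactly what forces the limit to have constant sign). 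The paper avoids this by taking $W=V_{\mu_m/2}(\pm P_m)$, for which $V_{\mu_m/2}(W)\subseteq V_{\mu_m}(\pm P_m)=\overline{\pm D_m^0}$; then the claim \eqref{claim2} can be formulated for points of $\overline{\pm D_m^0}$ itself, its negation implies \eqref{one}, and conclusion $(ii)$ still applies to the near-optimal path since $\gamma([0,1])\subset\overline{\pm D_m^0}$. Replacing your $W$ by $V_{\mu_m/2}(\pm P_m)$ (everything else in your argument goes through unchanged) closes this small gap and yields the sequence inside $\overline{\pm D_m^0}$ as stated.
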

\begin{proof}
We define
\begin{equation*}
c^\pm_m:=\inf_{\substack{\gamma\in\Gamma^\pm_m}}\sup_{u\in\gamma([0,1])}J(u),
\end{equation*}
where
\begin{equation*}
\Gamma^\pm_m:=\Big\{\gamma\in C\big([0,1],\overline{\pm D^0_m}\big)\,\,;\,\, \gamma(0)=0,\,\,\,\gamma(1)=e^\pm\Big\}.
\end{equation*}
One can verify easily that the map $\gamma:[0,1]\to\overline{\pm D^0_m}$ defined by $\gamma(t)=te^\pm$ belongs to $\Gamma^\pm_m$.\\
By remarking that $\inf_{\substack{u\in \overline{\pm D^0_m}\\ \|u\|=r}}J(u)\geq \inf_{\substack{u\in X\\ \|u\|=r}}J(u)$, we deduce from $(A_1)$ that $c^\pm_m\geq \rho$. \\
\par We claim that 
\begin{equation}\label{claim2}
\forall\varepsilon_0\in]0,(c^\pm_m-\delta)[,\,\,\exists u\in J^{-1}\big([c^\pm_m-2\varepsilon_0,c^\pm_m+2\varepsilon_0]\big)\cap \left(\overline{\pm D^0_m}\right)\,\,;\,\, \|J'_m(u)\|<\varepsilon_0.
\end{equation}
Indeed, if the claim is not true then there exists $\varepsilon_0\in]0,(c^\pm_m-\delta)/2[$ such that $\|J'_m(u)\|\geq\varepsilon_{0}$ for all $u\in J^{-1}\big([c_m^\pm-2\varepsilon_0,c_m^\pm+2\varepsilon_0]\big)\cap \left(\overline{\pm D^0_m}\right)$. We apply Lemma \ref{deformationlemma} with $c=c^\pm_m$ and $W=V_{\frac{\mu_m}{2}}(\pm P_m)$ and we define
\begin{equation*}
\theta:[0,1]\to \overline{\pm D^0_m},\qquad \theta(t):=\eta(1,\gamma(t)),
\end{equation*}
where $\gamma\in\Gamma^\pm_m$ satisfies
\begin{equation}\label{star}
\sup_{u\in\gamma([0,1])}J(u)\leq c^\pm_m+\varepsilon,
\end{equation}
with $\varepsilon$ and $\eta$ given by Lemma \ref{deformationlemma}.\\
It is not difficult to show, using the properties of $\eta$, that $\theta$ belongs to $\Gamma^\pm_m$.\\
Inequality \eqref{star} and conclusion $(ii)$ of Lemma \ref{deformationlemma} imply that
\begin{align*}
\sup_{\substack{u\in\theta([0,1])}}J(u)&=\sup_{\substack{u\in\eta\big(1,\gamma([0,1])\big)\cap \overline{\pm D^0_m}}}J_m(u)\\
&\leq \sup_{\substack{u\in\eta\big(1,J_m^{-1}(]-\infty,c^\pm_{m}+\varepsilon])\cap \overline{\pm D^0_m}\big)}}J_m(u)\\
&\leq c^\pm_{m}-\varepsilon.
\end{align*}
This contradicts the definition of $c^\pm_m$. Consequently \eqref{claim2} holds and the conclusion of the theorem follows.
\end{proof}

To find sign-changing critical points of mountain pass type, we will use a version of the mountain pass theorem with a new characterization of the mountain pass level given in \cite{LiuLiuWang15}. We will use the following notations:
\begin{equation*}
D_m=D_m^0\cup(-D_m^0)\quad\text{and}\quad S_m:=Y_m\backslash D_m.
\end{equation*}
\begin{theorem}\label{scmpt}
Let $J\in C^1(X,\mathbb{R})$. Assume that for $m>2$ there exists $\mu_m>0$ such that $(A_0)$ is satisfied. Assume also that there exists a continuous map $\varphi_0:\Delta\to Y_m$ satisfying:
\begin{enumerate}
\item[(1)] $\varphi_0(\partial_1\Delta)\subset D_m^0$ and $\varphi_0(\partial_2\Delta)\subset -D_m^0$,
\item[(2)] $\varphi_0(\partial_0\Delta)\cap D_m^0\cap (-D_m^0)=\emptyset$,
\item[(3)] $c_0:=\sup_{u\in\varphi_0(\partial_0\Delta)}J(u)<c_m^\star:=\inf_{u\in\partial(D_m^0)\cap \partial(-D_m^0)}J(u) $,
\end{enumerate}
where 
\begin{equation*}
\Delta=\big\{(s,t)\in\mathbb{R}^2\,\,:\,\, s,t\geq0,\quad s+t\leq1\big\}
\end{equation*}
\begin{equation*}
\partial_1\Delta=\{0\}\times[0,1],\quad \partial_2\Delta=[0,1]\times\{0\}\quad\text{and}\quad \partial_0\Delta=\big\{(s,t)\in \Delta \,\,:\,\, s+t=1\big\}.
\end{equation*}
Then there exists a sequence $(u_m^n)_n\subset V_{\frac{\mu_m}{2}}(S_m)$ such that
\begin{equation*}
\lim_{\substack{n\to\infty}}J'_m(u_m^n)=0\quad\text{and}\quad \lim_{\substack{n\to\infty}}J(u_m^n)\in\big[c_0,\sup_{u\in\varphi_0(\Delta)} J(u)\big].
\end{equation*}
\end{theorem}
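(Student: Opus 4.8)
The plan is to argue by contradiction, mimicking the proof of Theorem~\ref{mountain} but working with the ``sign-changing'' characterization of the mountain pass level due to Liu--Liu--Wang. Suppose the conclusion fails. Then there exist $c\in\big[c_0,\sup_{u\in\varphi_0(\Delta)}J(u)\big]$ and $\varepsilon_0>0$ such that $\|J_m'(u)\|\geq\varepsilon_0$ for all $u$ in $J_m^{-1}\big([c-2\varepsilon_0,c+2\varepsilon_0]\big)\cap V_{\frac{\mu_m}{2}}(S_m)$; more precisely one takes $c$ to be the minimax value
\begin{equation*}
c:=\inf_{\varphi\in\Gamma_m}\sup_{u\in\varphi(\Delta)}J(u),
\end{equation*}
where $\Gamma_m$ is the class of continuous maps $\varphi:\Delta\to Y_m$ with $\varphi=\varphi_0$ on $\partial_0\Delta$, $\varphi(\partial_1\Delta)\subset D_m^0$ and $\varphi(\partial_2\Delta)\subset -D_m^0$. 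First I would check that this class is nonempty (it contains $\varphi_0$, once one verifies that $\varphi_0$ itself satisfies the boundary constraints, which it does by hypothesis~(1)) and that $c$ is well defined and finite, with $c\leq\sup_{u\in\varphi_0(\Delta)}J(u)$ automatically.

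The key step is to show $c\geq c_0$, and in fact $c\geq c_m^\star>c_0$ would suffice for the separation argument but is not needed; what is really needed is the intersection lemma: every $\varphi\in\Gamma_m$ has $\varphi(\Delta)\cap \partial(D_m^0)\cap\partial(-D_m^0)\neq\emptyset$. This is the heart of the matter and the analogue of the ``linking'' in the Liu--Liu--Wang characterization: a continuous map on the triangle $\Delta$ that sends one leg into $D_m^0$, the other leg into $-D_m^0$, and keeps the hypotenuse off the overlap $D_m^0\cap(-D_m^0)$ must hit the common boundary. I would prove this by a degree-theoretic or connectedness argument in the finite-dimensional space $Y_m$ — considering the function $u\mapsto\big(dist(u,P_m),dist(u,-P_m)\big)$ composed with $\varphi$, noting it maps $\partial\Delta$ into the complement of $\{0\}$ in a way that is not nullhomotopic there, hence the image of $\Delta$ must meet $\{dist(u,P_m)=dist(u,-P_m)=0\}$, or rather the relevant level set describing $\partial(D_m^0)\cap\partial(-D_m^0)$. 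Granting this, hypothesis~(3) gives $\sup_{u\in\varphi(\Delta)}J(u)\geq c_m^\star>c_0$ for every $\varphi\in\Gamma_m$, so $c\geq c_m^\star>c_0$, and in particular $c>c_0=\sup_{u\in\varphi_0(\partial_0\Delta)}J(u)$.

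Now I would run the deformation argument. Choose $\varepsilon_0\in\big]0,(c-c_0)/2\big[$ small enough that the no-critical-point assumption above holds on $J_m^{-1}\big([c-2\varepsilon_0,c+2\varepsilon_0]\big)\cap V_{\frac{\mu_m}{2}}(W)$ with $W=V_{\frac{\mu_m}{2}}(S_m)$ — note $W$ is symmetric if $J$ is even, so Lemma~\ref{deformationlemma} applies — and obtain $\varepsilon\in]0,\varepsilon_0[$ and the deformation $\eta$. Pick $\varphi\in\Gamma_m$ with $\sup_{u\in\varphi(\Delta)}J(u)\leq c+\varepsilon$ and set $\widetilde\varphi(z)=\eta(1,\varphi(z))$. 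Because $c+\varepsilon$ exceeds $\sup_{u\in\varphi(\partial_0\Delta)}J(u)=c_0$ while $2\varepsilon<c-c_0$, conclusion~(i) of Lemma~\ref{deformationlemma} shows $\eta(1,\cdot)$ fixes $\varphi(\partial_0\Delta)$, so $\widetilde\varphi=\varphi_0$ on $\partial_0\Delta$; conclusion~(iv) shows $\widetilde\varphi(\partial_1\Delta)\subset D_m^0$ and $\widetilde\varphi(\partial_2\Delta)\subset-D_m^0$; hence $\widetilde\varphi\in\Gamma_m$. But conclusions~(ii)--(iii) of Lemma~\ref{deformationlemma}, applied to $W$, force $\sup_{u\in\widetilde\varphi(\Delta)}J(u)\leq c-\varepsilon$, contradicting the definition of $c$. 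The main obstacle, as indicated, is the purely topological intersection lemma in step two — getting the linking right between the triangle's boundary structure and the pair of cones $\pm P_m$; the rest is a faithful adaptation of the proof of Theorem~\ref{mountain}, where one must only be careful that the relevant sublevel set, the set $W$, and the ambient minimax class are all preserved by the deformation and (when $J$ is even) are symmetric.
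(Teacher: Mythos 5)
Your overall architecture (a minimax level over the class $\Gamma_m$ of maps agreeing with $\varphi_0$ on $\partial_0\Delta$, the intersection lemma to bound that level from below by $c_m^\star>c_0$, then the quantitative deformation applied to a near-optimal $\varphi$) is the same as the paper's, and your verification that the deformed map $\widetilde\varphi=\eta(1,\varphi(\cdot))$ remains in $\Gamma_m$ — conclusion (i) of Lemma \ref{deformationlemma} fixes $\varphi(\partial_0\Delta)$ because $c_0<c-2\varepsilon$, and conclusion (iv) preserves $\pm D_m^0$ on the two legs — is exactly right. The intersection lemma you propose to establish by a degree/linking argument; the paper simply quotes it from \cite{LiuLiuWang15} as Lemma \ref{intersection}, so nothing is lost there.

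The genuine gap is in your definition of the minimax level. You set $c=\inf_{\varphi\in\Gamma_m}\sup_{u\in\varphi(\Delta)}J(u)$, with the supremum over the whole image, whereas the paper takes $c_m=\inf_{\varphi\in\Gamma_m}\sup_{u\in\varphi(\Delta)\cap S_m}J(u)$. That restriction is not cosmetic: it is precisely the ``new characterization of the mountain pass level'' the theorem is built on, and without it your final step collapses. The deformation lemma must be invoked with $W=S_m$ (not $W=V_{\frac{\mu_m}{2}}(S_m)$, incidentally: the hypothesis \eqref{one} requires the gradient lower bound on $V_{\frac{\mu_m}{2}}(W)$, and the negation of the theorem's conclusion only supplies it on $V_{\frac{\mu_m}{2}}(S_m)$). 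Its conclusion (ii) then pushes below level $c-\varepsilon$ only the part of $\varphi(\Delta)$ lying in $S_m$; conclusion (iii) merely says $J$ does not increase elsewhere. Points of $\varphi(\Delta)$ sitting inside $D_m^0\cup(-D_m^0)$ may retain $J$-values above $c-\varepsilon$ after the deformation, so you cannot conclude $\sup_{u\in\widetilde\varphi(\Delta)}J(u)\leq c-\varepsilon$ and no contradiction with your $c$ is obtained. With the restricted supremum everything closes: by conclusion (iv), any point of $\widetilde\varphi(\Delta)\cap S_m$ is the image under $\eta(1,\cdot)$ of a point of $\varphi(\Delta)\cap S_m$, hence of a point of $S_m$ with $J\leq c_m+\varepsilon$, and (ii) gives $J\leq c_m-\varepsilon$; and the lower bound $c_m\geq c_m^\star$ survives the restriction because $\partial(D_m^0)\cap\partial(-D_m^0)\subset S_m$, so the intersection lemma still applies to $\varphi(\Delta)\cap S_m$.
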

Before giving the proof of this theorem, we recall the following useful intersection lemma which the proof can be found in \cite{LiuLiuWang15}.
\begin{lemma}\label{intersection}
If $\varphi:\Delta\to Y_m$ satisfies $\varphi(\partial_1\Delta)\subset D_m^0$, $\varphi(\partial_2\Delta)\subset -D_m^0$, and $\varphi(\partial_0\Delta)\cap D_m^0\cap (-D_m^0)=\emptyset$, then $\varphi(\Delta)\cap\partial(D_m^0)\cap\partial(-D_m^0)\neq\emptyset$.
\end{lemma}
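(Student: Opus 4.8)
The plan is to reduce the statement to a two--dimensional zero--finding problem and resolve it by a winding number (Brouwer degree) computation. Write $d_+(u):=dist(u,P_m)$ and $d_-(u):=dist(u,-P_m)$; both are continuous (indeed $1$--Lipschitz) on $Y_m$, so that $D_m^0=\{d_+<\mu_m\}$, $-D_m^0=\{d_-<\mu_m\}$, and $\partial(D_m^0)=\{d_+=\mu_m\}$, $\partial(-D_m^0)=\{d_-=\mu_m\}$. Define $F:\Delta\to\mathbb{R}^2$ by
\[
F(x):=\big(d_+(\varphi(x))-\mu_m,\;d_-(\varphi(x))-\mu_m\big)=:\big(F_1(x),F_2(x)\big).
\]
The conclusion $\varphi(\Delta)\cap\partial(D_m^0)\cap\partial(-D_m^0)\neq\emptyset$ is precisely the assertion that $F$ vanishes somewhere on $\Delta$. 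I would argue by contradiction, assuming $F(x)\neq(0,0)$ for every $x\in\Delta$.

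Next I would translate the three hypotheses into sign data for $F$ on $\partial\Delta$. On $\partial_1\Delta$ we have $\varphi\in D_m^0$, hence $F_1<0$; on $\partial_2\Delta$ we have $\varphi\in -D_m^0$, hence $F_2<0$; and on $\partial_0\Delta$ the condition $\varphi(x)\notin D_m^0\cap(-D_m^0)$ reads $F_1(x)\geq0$ or $F_2(x)\geq0$, i.e. $F$ omits the open third quadrant $Q:=\{F_1<0,\,F_2<0\}$. Evaluating at the three vertices (each lying on two edges) then pins down a quadrant for $F$: at $(0,0)$ one has $F\in Q$, at $(1,0)$ one has $F_1\geq0,\,F_2<0$, and at $(0,1)$ one has $F_1<0,\,F_2\geq0$.

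The heart of the argument is to compute the degree $\deg(F,\operatorname{int}\Delta,0)$, equivalently the winding number of $F|_{\partial\Delta}$ about the origin, by following a continuous lift of the argument of $F$ as $x$ traverses $\partial\Delta$ counterclockwise (bottom edge, then hypotenuse, then left edge). On each edge $F$ is confined to a proper arc of the circle of directions — the lower half--plane $\{F_2<0\}$ (an arc of length $\pi$) on the bottom edge, the complement of $Q$ (an arc of length $\tfrac{3\pi}{2}$) on the hypotenuse, and the left half--plane $\{F_1<0\}$ (an arc of length $\pi$) on the left edge — so on each edge the lift of the argument is pinned by its endpoint (vertex) values. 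On the hypotenuse the crucial point is that, to join the fourth--quadrant value at $(1,0)$ to the second--quadrant value at $(0,1)$ while omitting $Q$, the argument must increase through the first quadrant rather than decrease through $Q$. Summing the three net increments and using that their total is an integer multiple of $2\pi$, one finds that the only value in the resulting range is $2\pi$, so the winding number equals $1$. This contradicts the fact that, $\Delta$ being homeomorphic to the closed disk and $F$ taking values in $\mathbb{R}^2\setminus\{0\}$, the loop $F|_{\partial\Delta}$ is null--homotopic in $\mathbb{R}^2\setminus\{0\}$ and hence has winding number $0$.

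The main obstacle is the careful bookkeeping of the continuous lift, in particular handling the degenerate situations where $F$ lands on a coordinate axis, so that a vertex or edge value sits on the boundary between two quadrants; this is dealt with by retaining the appropriate half--open angle ranges, and by observing at the outset that any $x\in\partial_0\Delta$ with $F(x)=(0,0)$ would already furnish the desired intersection, so such points may be excluded under the contradiction hypothesis. A secondary, purely technical point is the verification that $d_\pm$ are continuous with $\partial(\pm D_m^0)=\{d_\pm=\mu_m\}$, which uses only that $\pm P_m$ are closed convex cones. An alternative to the explicit winding count is to invoke a Poincar\'e--Miranda / Brouwer argument after identifying $\Delta$ with a square, but the boundary data match the triangle more naturally.
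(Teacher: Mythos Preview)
Your argument is correct. Note that the paper itself does not supply a proof of this lemma; it simply records that a proof can be found in \cite{LiuLiuWang15} and moves on. Your reduction to the planar map
\[
F=\big(d_+\circ\varphi-\mu_m,\; d_-\circ\varphi-\mu_m\big)
\]
followed by a winding--number computation for $F|_{\partial\Delta}$ about the origin is a clean, self--contained substitute for that citation. The edge--by--edge bookkeeping checks out: on the bottom edge $F$ lies in the open lower half--plane, on the left edge in the open left half--plane, and on the hypotenuse in the complement of the open third quadrant (a proper arc of length $3\pi/2$ on $S^1$); combined with the vertex constraints you identified---$F(0,0)$ in the third quadrant, $F(1,0)$ with $F_1\ge 0,\,F_2<0$, $F(0,1)$ with $F_1<0,\,F_2\ge 0$---any continuous lift of $\arg F$ along $\partial\Delta$ must increase by an amount in the open interval $(3\pi/2,\,5\pi/2)$, hence by exactly $2\pi$. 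This contradicts the fact that $F|_{\partial\Delta}$ extends over the contractible set $\Delta$ into $\mathbb{R}^2\setminus\{0\}$ and therefore has winding number zero.

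You also handle the two technical points properly. First, the identification $\partial(\pm D_m^0)=\{d_\pm=\mu_m\}$ holds because $\pm P_m$ are nonempty closed convex subsets of the finite--dimensional space $Y_m$, so a nearest point $p\in\pm P_m$ to any $u$ exists and the segment $t\mapsto(1-t)u+tp$ enters $\{d_\pm<\mu_m\}$ for $t>0$. Second, the only place on $\partial\Delta$ where $F$ could vanish is on $\partial_0\Delta$, and a zero there already gives $\varphi(x)\in\partial(D_m^0)\cap\partial(-D_m^0)$; so under the contradiction hypothesis $F\neq 0$ on all of $\partial\Delta$ and the winding number is well defined.

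The argument in the cited reference proceeds by the same planar reduction and a Brouwer--degree computation; your explicit argument--lift is an equivalent and, in two dimensions, particularly transparent formulation.
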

\begin{proof}[{\bf Proof of Theorem \ref{scmpt}.} ]
We define
\begin{equation*}
\Gamma_m:=\big\{\varphi\in C(\Delta,Y_m)\,\,:\,\, \varphi(\partial_1\Delta)\subset D_m^0,\,\,\,\, \varphi(\partial_2\Delta)\subset -D_m^0\,\, \text{and}\,\,\,\, \varphi|_{\partial_0\Delta}=\varphi_0\big\}.
\end{equation*}
It is clear that $\varphi_0\in\Gamma_m$. By Lemma \ref{intersection} above we have $\varphi(\Delta)\cap\partial(D_m^0)\cap\partial(-D_m^0)\neq\emptyset$ for any $\varphi\in \Gamma_m$. This intersection property implies that
\begin{equation*}
c_m:=\inf_{\varphi\in\Gamma_m}\sup_{u\in\varphi(\Delta)\cap S_m}J(u)\geq c_m^\star>c_0.
\end{equation*}
Let us show that
\begin{equation}\label{claimscmpt}
\forall\varepsilon_0\in\big]0,\frac{c_m-c_0}{2}\big[,\,\,\exists u\in J_m^{-1}\left([c_m-2\varepsilon_0,c_m+2\varepsilon_0]\right)\cap V_{\frac{\mu_m}{2}}(S_m)\,\,;\,\, \|J_m'(u)\|<\varepsilon_0.
\end{equation}
Arguing toward a contradiction, we assume that \eqref{claimscmpt} does not hold, that is there exists $\varepsilon_0\in]0,\frac{c_m-c_0}{2}[$ such that $\|J_m'(u)\|\geq\varepsilon_0$, for all $u\in J_m^{-1}\left([c_m-2\varepsilon_0,c_m+2\varepsilon_0]\right)\cap V_{\frac{\mu_m}{2}}(S_m)$. We can then apply Lemma \ref{deformationlemma} with $c=c_m$ and $W=S_m$. Using $\varepsilon$ and the deformation $\eta$ given by Lemma \ref{deformationlemma}, we define the map
\begin{equation*}
\beta:\Delta\to Y_m,\quad x\mapsto \beta(x):=\eta(1,\varphi(x)),
\end{equation*}
where $\varphi\in\Gamma_m$ is chosen such that 
\begin{equation}\label{starscmpt}
\sup_{u\in\varphi(\Delta)\cap S_m}J(u)\leq c_m+\varepsilon.
\end{equation}
Since $c_0<c_m-2\varepsilon$, condition $(3)$ in Theorem \ref{scmpt} and conclusion $(i)$ of Lemma \ref{deformationlemma} imply that $\varphi_0(\partial_0\Delta)\subset J_m^{-1}\left(]-\infty,c_m-2\varepsilon[\right)$. It follows that $\beta\in \Gamma_m$.\\
Now, using the conclusions $(ii)$ and $(iv)$ of Lemma \ref{deformationlemma} and the relation \eqref{starscmpt}, one can easily verify that
\begin{equation*}
\eta\left(1,\varphi(\Delta)\right)\cap S_m\subset \eta\left(1,J_m^{-1}\left(]-\infty,c_m+\varepsilon]\right)\cap S_m\right)\subset J_m^{-1}\left(]-\infty,c_m-\varepsilon]\right).
\end{equation*}
It follows that
\begin{equation*}
\sup_{u\in\beta(\Delta)\cap S_m }J(u)=\sup_{u\in\eta(1,\varphi(\Delta))\cap S_m}J(u)\leq c_m-\varepsilon,
\end{equation*}
which is in contradiction with the definition of $c_m$.\\
The above contradiction assures that \eqref{claimscmpt} holds. We then conclude by letting $\varepsilon_0\to0$.      
\end{proof}

We terminate this section by recalling a version of the symmetric mountain pass theorem we will apply in order to get infinitely many sign-changing critical points. We introduce for $k\geq2$ and $m>k+2$ the following notations:
\begin{equation*}
 Z_k=\overline{\oplus_{j=k}^\infty X_j},\quad Z^m_k=\oplus_{j=k}^m X_j,\quad  B_k:=\big\{u\in Y_k\,;\, \|u\|\leq\rho_k\big\}, 
\end{equation*}
\begin{equation*}
N_k:=\big\{u\in Z_k\,;\,\|u\|=r_k\big\},\,\, N^m_k:=\big\{u\in Z^m_k\,;\,\|u\|=r_k\big\},\,\,\text{where }0<r_k<\rho_k.
\end{equation*} 
The following result was established in \cite{Batkam1}.
\begin{theorem}\label{fountaintheorem}
Let $J\in C^1(X,\mathbb{R})$ be even. Assume that for $k\geq2$ and $m>k+2$, there exist $0<r_k<\rho_k$ and $\mu_m>0$ such that $(A_0)$ and the following two conditions are satisfied:
\begin{enumerate}
\item[$(A'_1)$] $a_k:=\max_{\substack{u\in \partial B_k}}J(u)<b_k:=\inf_{\substack{u\in N_k}}J(u)$.
\item[$(A'_2)$] $N^m_k\subset S_m$.
\end{enumerate}
Then there exists a sequence $(u_{k,m}^n)_n\subset V_{\frac{\mu_m}{2}}(S_m)$ such that
\begin{equation*}
\lim_{\substack{n\to\infty}}J'_m(u_{k,m}^n)=0\quad\text{and}\quad \lim_{\substack{n\to\infty}}J(u_{k,m}^n)\in\big[b_k,\max_{\substack{u\in B_k}}J(u)\big].
\end{equation*}
\end{theorem}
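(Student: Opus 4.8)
The plan is to mimic the minimax arguments used above for Theorems \ref{mountain} and \ref{scmpt}, replacing the one-dimensional (resp.\ triangular) family of paths by a $\mathbb{Z}_2$-equivariant family of disks. Since $J$ is even the cone structure is symmetric, $-S_m=S_m$, and Lemma \ref{deformationlemma} furnishes an \emph{odd} deformation; these are the three facts that make the symmetric scheme run. Concretely, I would introduce the class
\[
\Gamma_{k,m}:=\big\{\gamma\in C(B_k,Y_m)\;:\;\gamma\text{ is odd and }\gamma|_{\partial B_k}=\mathrm{id}\big\}
\]
and set $c_{k,m}:=\inf_{\gamma\in\Gamma_{k,m}}\sup_{u\in\gamma(B_k)\cap S_m}J(u)$. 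The inclusion $B_k\hookrightarrow Y_m$ belongs to $\Gamma_{k,m}$, so $c_{k,m}\le\sup_{u\in B_k}J(u)=\max_{u\in B_k}J(u)$, which is already the upper bound asserted by the theorem.

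The first genuinely new ingredient is an intersection (linking) property playing the role of Lemma \ref{intersection}: every $\gamma\in\Gamma_{k,m}$ satisfies $\gamma(B_k)\cap N^m_k\neq\emptyset$. To prove it I would let $Q\colon Y_m\to Y_{k-1}$ be the orthogonal projection (so $\ker Q=Z^m_k$) and consider the bounded, open, symmetric neighbourhood of the origin $O:=\{u\in B_k\;:\;\|\gamma(u)\|<r_k\}$; since $\gamma=\mathrm{id}$ on $\partial B_k$ and $r_k<\rho_k$, the set $O$ lies in the interior of $B_k$ and $\|\gamma(u)\|=r_k$ on its boundary $\partial O$ in $Y_k$. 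As $u\mapsto Q\gamma(u)$ is continuous and odd on $\partial O$ and $\dim Y_{k-1}<\dim Y_k$, the Borsuk--Ulam theorem yields $u_0\in\partial O$ with $Q\gamma(u_0)=0$, i.e.\ $\gamma(u_0)\in Z^m_k$ and $\|\gamma(u_0)\|=r_k$, hence $\gamma(u_0)\in N^m_k$. By $(A'_2)$ one has $N^m_k\subset S_m$, so $\gamma(u_0)\in\gamma(B_k)\cap S_m$ and therefore $\sup_{u\in\gamma(B_k)\cap S_m}J(u)\ge\inf_{v\in N^m_k}J(v)\ge\inf_{v\in N_k}J(v)=b_k$. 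Taking the infimum over $\gamma$ and invoking $(A'_1)$ gives $c_{k,m}\ge b_k>a_k$, so in particular $c_{k,m}\in[b_k,\max_{u\in B_k}J(u)]$.

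The second step is the standard deformation argument. I would claim that for every $\varepsilon_0\in\big]0,\frac{c_{k,m}-a_k}{2}\big[$ there exists $u\in J_m^{-1}\big([c_{k,m}-2\varepsilon_0,c_{k,m}+2\varepsilon_0]\big)\cap V_{\frac{\mu_m}{2}}(S_m)$ with $\|J'_m(u)\|<\varepsilon_0$, and then conclude by letting $\varepsilon_0=1/n\to0$. Suppose the claim failed for some such $\varepsilon_0$; then \eqref{one} holds with $c=c_{k,m}$ and $W=S_m$, and Lemma \ref{deformationlemma} provides $\varepsilon\in\,]0,\varepsilon_0[$ and an odd $\eta$. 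Pick $\gamma\in\Gamma_{k,m}$ with $\sup_{u\in\gamma(B_k)\cap S_m}J(u)\le c_{k,m}+\varepsilon$ and set $\widetilde\gamma:=\eta(1,\gamma(\cdot))$. Then $\widetilde\gamma$ is odd by conclusion $(v)$; it equals $\mathrm{id}$ on $\partial B_k$ because there $J(u)\le a_k<c_{k,m}-2\varepsilon$, so $\eta(1,u)=u$ by conclusion $(i)$; hence $\widetilde\gamma\in\Gamma_{k,m}$. Conclusion $(iv)$ shows that $\eta(1,\cdot)$ maps $D_m$ into $D_m$, whence $\eta(1,\gamma(B_k))\cap S_m\subset\eta\big(1,\gamma(B_k)\cap S_m\big)$; combining this with $\gamma(B_k)\cap S_m\subset J_m^{-1}\big(]-\infty,c_{k,m}+\varepsilon]\big)\cap S_m$ and conclusion $(ii)$ gives $\widetilde\gamma(B_k)\cap S_m\subset J_m^{-1}\big(]-\infty,c_{k,m}-\varepsilon]\big)$, so $\sup_{u\in\widetilde\gamma(B_k)\cap S_m}J(u)\le c_{k,m}-\varepsilon$, contradicting the definition of $c_{k,m}$. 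This produces the desired sequence $(u^n_{k,m})_n\subset V_{\frac{\mu_m}{2}}(S_m)$ with $J'_m(u^n_{k,m})\to0$ and $J(u^n_{k,m})\to c_{k,m}$.

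The main obstacle is the linking step: one must set up $O$ so that it is honestly open in $Y_k$ with $\partial O$ staying strictly inside $B_k$ and with $\|\gamma\|\equiv r_k$ there, and then apply Borsuk--Ulam in the right dimensions; equally delicate is checking that the equivariant deformation keeps the disk inside $\Gamma_{k,m}$, which is exactly where the oddness of $\eta$ (conclusion $(v)$), the invariance of $\pm D^0_m$ (conclusion $(iv)$), and the separation $a_k<b_k\le c_{k,m}$ are used together. Once these points are in place the rest is a routine transcription of the proofs of Theorems \ref{mountain} and \ref{scmpt}; the complete argument is carried out in \cite{Batkam1}.
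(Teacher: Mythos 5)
Your proof is correct and follows essentially the same route as the source: the paper does not reprove Theorem \ref{fountaintheorem} but cites \cite{Batkam1}, where the argument is exactly the symmetric minimax scheme you describe — the class of odd maps equal to the identity on $\partial B_k$, the Borsuk--Ulam intersection with $N^m_k$ combined with $(A'_2)$ to get $c_{k,m}\geq b_k>a_k$, and the equivariant deformation from Lemma \ref{deformationlemma} (using its conclusions $(i)$, $(ii)$, $(iv)$, $(v)$) to produce the contradiction. No gaps; nothing further is needed.
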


\section{Proof of the main result}\label{section3}

 In this section, we apply the previous abstract theorems to prove our main result. We assume throughout this section that $(f_{1,2,3})$ are satisfied. We will also denote by $|.|_{q}$ the usual norm of the Lebesgue space $L^{q}(\Omega)$. 
 
We define $X$ to be the usual Sobolev space $H_{0}^{1}(\Omega)$ endowed with the inner product
\begin{equation*}
\langle u, v\rangle=\int_{\Omega}\nabla u\nabla vdx
\end{equation*}
and norm $\|u\|^{2}=\langle u, u\rangle$, for $u, v\in H_{0}^{1}(\Omega)$.\\
The following result is well known (see e.g \cite{DM, Ruiz, ZZ}).
\begin{lemma}\label{three}
For each $u\in H_{0}^{1}(\Omega)$, there exists a unique element $\phi_u\in H_{0}^{1}(\Omega)$ such that $-\Delta \phi_u=u^2$. Moreover, $\phi_{u}$ has the following properties: 
\begin{enumerate}
\item[$(i)$] there exists $C>0$ such that $\|\phi_{u}\|\leq C\|u\|^{2}$ and
\begin{equation*}
\int_{\Omega}|\nabla \phi_{u}|^{2}dx=\int_{\Omega}\phi_{u}u^{2}dx\leq C\|u\|^{4};
\end{equation*}
\item[$(ii)$] $\phi_{u}\geq 0$ and $\phi_{tu}=t^{2}\phi_{u}, \ \forall \ t>0;$
\item[$(iii)$] if $u_{n}\rightharpoonup u$ in $H_{0}^{1}(\Omega)$, then $\phi_{u_{n}}\rightharpoonup \phi_{u}$ in $H_{0}^{1}(\Omega)$ and
\begin{equation*}
\lim_{n\to\infty}\int_{\Omega}\phi_{u_{n}}u_{n}^{2}dx=\int_{\Omega}\phi_{u}u^{2}dx.
\end{equation*}
\end{enumerate}
\end{lemma}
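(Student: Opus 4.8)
The plan is to construct $\phi_u$ via the Riesz representation theorem and then read off the stated properties one by one. First I would note that since $N\le 3$ the Sobolev embedding $H_0^1(\Omega)\hookrightarrow L^6(\Omega)$ holds (with even better embeddings when $N=1,2$), so for every $u\in H_0^1(\Omega)$ we have $u^2\in L^3(\Omega)\subset L^{6/5}(\Omega)$, and the map $v\mapsto\int_\Omega u^2 v\,dx$ is a bounded linear functional on $H_0^1(\Omega)$ with norm $\le C|u|_{12/5}^2\le C'\|u\|^2$. The Riesz representation theorem (equivalently Lax--Milgram) then yields a unique $\phi_u\in H_0^1(\Omega)$ with $\langle\phi_u,v\rangle=\int_\Omega u^2 v\,dx$ for all $v\in H_0^1(\Omega)$, i.e. $-\Delta\phi_u=u^2$ weakly; uniqueness is built into the statement of Riesz.

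For $(i)$, I would test the identity with $v=\phi_u$ and apply H\"older and Sobolev: $\|\phi_u\|^2=\int_\Omega u^2\phi_u\,dx\le|u^2|_{6/5}|\phi_u|_6\le C\|u\|^2\|\phi_u\|$, which gives $\|\phi_u\|\le C\|u\|^2$ and $\int_\Omega|\nabla\phi_u|^2\,dx=\|\phi_u\|^2=\int_\Omega\phi_u u^2\,dx\le C\|u\|^4$. For $(ii)$, testing with $v=\phi_u^-:=\min\{\phi_u,0\}$ yields $\|\phi_u^-\|^2=\langle\phi_u,\phi_u^-\rangle=\int_\Omega u^2\phi_u^-\,dx\le0$, so $\phi_u^-\equiv0$ and $\phi_u\ge0$ (alternatively invoke the weak maximum principle); the homogeneity $\phi_{tu}=t^2\phi_u$ for $t>0$ follows from uniqueness since $-\Delta(t^2\phi_u)=t^2u^2=(tu)^2$.

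For $(iii)$, assume $u_n\rightharpoonup u$ in $H_0^1(\Omega)$. By the Rellich--Kondrachov theorem $u_n\to u$ in $L^{12/5}(\Omega)$, hence $u_n^2\to u^2$ in $L^{6/5}(\Omega)$ by writing $u_n^2-u^2=(u_n-u)(u_n+u)$ and using H\"older. Since $\{\phi_{u_n}\}$ is bounded in $H_0^1(\Omega)$ by $(i)$ and $\langle\phi_{u_n}-\phi_u,v\rangle=\int_\Omega(u_n^2-u^2)v\,dx\to0$ for every $v\in H_0^1(\Omega)$, a subsequence argument gives $\phi_{u_n}\rightharpoonup\phi_u$ in $H_0^1(\Omega)$. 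To get the convergence of the energies I would split
\[
\int_\Omega\phi_{u_n}u_n^2\,dx-\int_\Omega\phi_u u^2\,dx=\int_\Omega(\phi_{u_n}-\phi_u)u_n^2\,dx+\int_\Omega\phi_u(u_n^2-u^2)\,dx,
\]
estimating the second term by $|\phi_u|_6\,|u_n^2-u^2|_{6/5}\to0$ and the first by $|\phi_{u_n}-\phi_u|_{3/2}\,|u_n^2|_3=|\phi_{u_n}-\phi_u|_{3/2}\,|u_n|_6^2$, where the compact embedding $H_0^1(\Omega)\hookrightarrow\hookrightarrow L^{3/2}(\Omega)$ and $\phi_{u_n}\rightharpoonup\phi_u$ force $|\phi_{u_n}-\phi_u|_{3/2}\to0$ while $|u_n|_6$ stays bounded.

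The only genuinely delicate point is this last convergence: because $L^6$ is the critical Sobolev exponent when $N=3$, weak convergence of $\phi_{u_n}$ in $H_0^1(\Omega)$ does not upgrade to strong convergence in $L^6$, so one cannot directly pair $\phi_{u_n}$ against $u_n^2\in L^{6/5}$. The remedy is to pair the difference $\phi_{u_n}-\phi_u$, which does converge strongly in every subcritical $L^q$, against $u_n^2\in L^3$, exploiting that $3/2<6$. Everything else reduces to routine applications of the Riesz representation theorem, H\"older's inequality, and the Rellich--Kondrachov theorem.
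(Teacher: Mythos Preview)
Your proof is correct and essentially complete. Note, however, that the paper does not actually prove this lemma: it states the result as well known and refers the reader to \cite{DM,Ruiz,ZZ}. What you have written is precisely the standard argument one finds in those references---Lax--Milgram/Riesz for existence and uniqueness, testing with $\phi_u$ and $\phi_u^-$ for $(i)$ and $(ii)$, and Rellich--Kondrachov compactness for $(iii)$.

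Two very minor remarks. First, once you have shown $\langle\phi_{u_n}-\phi_u,v\rangle\to0$ for every $v\in H_0^1(\Omega)$, that \emph{is} weak convergence of the full sequence, so no subsequence argument is needed there. Second, in the last step, weak convergence $\phi_{u_n}\rightharpoonup\phi_u$ in $H_0^1(\Omega)$ together with the compact embedding into $L^{3/2}(\Omega)$ gives strong convergence in $L^{3/2}$ along subsequences, but since the weak limit is already identified the whole sequence converges; this is implicit in what you wrote but worth making explicit. Your observation that one must avoid pairing $\phi_{u_n}-\phi_u$ with $u_n^2$ in the critical $L^6$--$L^{6/5}$ duality (and instead use a subcritical exponent on the $\phi$ side) is exactly the right point to flag.
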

We mean by a weak solution of \eqref{2}, a function $u\in H_{0}^{1}(\Omega)$ such that
\begin{equation*}
\left(a+b\|u\|^{2}\right)\int_{\Omega}\nabla u\nabla vdx+\int_{\Omega}\phi_{u}uvdx=\int_{\Omega}f(x, u)vdx,\quad \forall v\in H_{0}^{1}(\Omega).
\end{equation*}
Let us consider the functional $J: H_{0}^{1}(\Omega)\to\mathbb{R}$ defined by 
\begin{equation*}
J(u)=\frac{a}{2}\|u\|^{2}+\frac{b}{4}\|u\|^{4}+\frac{1}{4}\int_{\Omega}\phi_{u}u^{2}dx-\int_{\Omega}F(x, u)dx,\quad u\in H_0^1(\Omega).
\end{equation*}
It is standard to verify that if $(f_1)$ holds then $J$ belongs to $C^1(X,\mathbb{R})$ with 
\begin{equation*}
\big<J'(u),v\big>=\big(a+b\|u\|^2\big)\int_\Omega \nabla u\nabla vdx+\int_{\Omega}\phi_{u}uvdx-\int_{\Omega}f(x, u)vdx,\, \forall u,v\in H_0^1(\Omega).
\end{equation*} 
Clearly, critical points of $J$ are weak solutions of problem \eqref{2}. 

Let us denote by $\lambda_{1}<\lambda_{2}<\lambda_{3}<\ldots$ the distinct eigenvalues of the problem
\begin{equation*}
-\Delta u=\lambda u \ \mbox{in} \ \Omega, \ u=0 \ \mbox{on} \ \partial\Omega.
\end{equation*}
It is well known that each $\lambda_{j}$ has finite multiplicity, the principal eigenvalue $\lambda_{1}$ is simple with positive eigenfunction $e_{1}$, and the eigenfunctions $e_{j}$ corresponding to $\lambda_{j}$ ($j\geq 2$) are sign-changing. Let $X_{j}$ be the eigenspace associated to $\lambda_{j}$. We set for $m>2$
\begin{equation*}
Y_{m}=\oplus_{j=1}^{m}X_{j}, J_{m}:=J|_{Y_{m}}, K_m:=\big\{u\in Y_m\,;\, J'_m(u)=0\big\}\quad\text{and}\quad E_m:=Y_m\backslash K_m
\end{equation*}
\begin{equation*}
+P_{m}:=\big\{u\in Y_{m}; u(x)\geq 0\big\}, Z_{k}^{m}:=\oplus_{j=k}^{m}X_{j}\quad\text{and}\quad N_{k}^{m}:=\big\{u\in Z_{k}^{m}; \|u\|=r_{k}\big\}.
\end{equation*}
Note that, for all $u\in +P_{m}\backslash\{0\}$, we have $\int_{\Omega} ue_{1}dx>0$, while for all $u\in Z_{k}$, $\int_{\Omega} ue_{1}dx=0$, where $e_{1}$ denotes the first eigenfunction of the Laplace operator with Dirichlet condition. This implies that $+P_{m}\cap Z_{k}=\{0\}$. In an analogue way we can show that $-P_{m}\cap Z_{k}=\{0\}$, where $-P_{m}:=\{u\in Y_{m}; u(x)\leq 0\}$. Therefore, since $N_{k}^{m}$ is compact, we conclude that
\begin{equation}\label{six}
\delta_{m}:=dist(N_{k}^{m}, -P_{m}\cup +P_{m})>0.
\end{equation}
Now we construct an odd locally Lipschitz continuous vector field satisfying the condition $(A_{0})$. The idea goes back to \cite{LiuWangZhang}.\\
For $u\in Y_{m}$ fixed, we consider the functional
\begin{equation*}I_{u}(v)=\frac{1}{2}\left(a+b\|u\|^{2}\right)\|v\|^{2}+\frac{1}{2}\int_{\Omega}\phi_{u}v^{2}dx - \int_{\Omega}vf(x, u)dx, \quad v\in Y_{m}.
\end{equation*}

It is not difficult to see that $I_{u}$ is of class $C^{1}$, coercive, bounded from below, weakly lower semicontinuous and strictly convex. Therefore $I_{u}$ admits a unique minimizer $v=Au\in Y_{m}$, which is the unique solution to the problem
\begin{equation*}
-\left(a+b\|u\|^{2}\right)\Delta v+\phi_{u}v=f(x, u), \quad v\in Y_{m}.
\end{equation*}
Clearly, the set of fixed points of $A$ coincides with $K_{m}$. Moreover the operator $A: Y_{m}\to Y_{m}$ has the following properties.

\begin{lemma}\label{field} \quad
\begin{enumerate}
\item[$(1)$] $A$ is continuous and maps bounded sets to bounded sets.
\item[$(2)$] For any $u\in Y_{m}$, we have
\begin{equation}
\langle J'_{m}(u), u-Au\rangle\geq \|u-Au\|^{2},
\end{equation}
\begin{equation}
\|J'_{m}(u)\|\leq (a+ C\|u\|^{2})\|u-Au\|.
\end{equation}

\item[$(3)$] There exists $\mu_{m}\in (0, \delta_{m})$ such that $A(\pm D_{m}^{0})\subset \pm D_{m}^{0}$, where $\delta_{m}$ is defined by $(\ref{six})$.
\end{enumerate}
\end{lemma}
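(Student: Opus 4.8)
The plan is to work throughout from the Euler--Lagrange characterization of $A$: $v=Au$ is the unique element of $Y_m$ satisfying
\[
\big(a+b\|u\|^{2}\big)\langle v,w\rangle+\int_{\Omega}\phi_{u}vw\,dx=\int_{\Omega}f(x,u)w\,dx\qquad\text{for all }w\in Y_m,
\]
together with Lemma \ref{three} and the embeddings $H_0^1(\Omega)\hookrightarrow L^{q}(\Omega)$, $1\le q\le6$ (valid since $N\le3$). For $(1)$, one tests the identity with $w=v=Au$ and discards the nonnegative term $\int_\Omega\phi_u v^2\,dx$ to get, by $(f_1)$, $a\|Au\|^2\le\int_\Omega f(x,u)Au\,dx\le c\big(1+|u|_p^{p-1}\big)|Au|_p\le C\big(1+\|u\|^{p-1}\big)\|Au\|$, hence $\|Au\|\le(C/a)\big(1+\|u\|^{p-1}\big)$, which is the boundedness assertion. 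For continuity, let $u_n\to u$ in $Y_m$; then $(Au_n)$ is bounded in the finite-dimensional space $Y_m$, so along a subsequence $Au_n\to\bar v$ in $Y_m$. Passing to the limit in the identity, using the continuity of $f$ and the convergence $\phi_{u_n}\to\phi_u$ in $L^q(\Omega)$ for $q<6$ (Lemma \ref{three}$(iii)$ and compactness), shows that $\bar v$ satisfies the same identity relative to $u$, so $\bar v=Au$ by uniqueness. Since every subsequence of $(Au_n)$ has a further subsequence converging to $Au$, the whole sequence converges.

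For $(2)$, subtract the identity for $Au$ from the formula for $J_m'$: for every $w\in Y_m$,
\[
\langle J_m'(u),w\rangle=\big(a+b\|u\|^{2}\big)\langle u-Au,w\rangle+\int_{\Omega}\phi_{u}(u-Au)w\,dx.
\]
Taking $w=u-Au$ and using $\phi_u\ge0$ and $a+b\|u\|^2\ge a$ gives the first inequality of $(2)$. For the second, bound the last integral via H\"older and Lemma \ref{three}$(i)$, $\big|\int_\Omega\phi_u(u-Au)w\,dx\big|\le|\phi_u|_3\,|u-Au|_3\,|w|_3\le C\|u\|^2\|u-Au\|\|w\|$, combine with $\big(a+b\|u\|^2\big)\|u-Au\|\|w\|$, and take the supremum over $w\in Y_m$ with $\|w\|\le1$; after renaming $C$ this yields $\|J_m'(u)\|\le\big(a+C\|u\|^2\big)\|u-Au\|$.

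For $(3)$ — the delicate part — the plan is to establish a quantitative estimate
\[
dist\big(Au,\pm P_m\big)\le\gamma\big(dist(u,\pm P_m)\big)\qquad\text{for all }u\in Y_m,
\]
with $\gamma:[0,\infty)\to[0,\infty)$ continuous, nondecreasing, and $\gamma(s)=o(s)$ as $s\to0^{+}$; granting this, one fixes $\mu_m\in(0,\delta_m)$ so small that $\gamma(s)\le s$ on $[0,\mu_m]$, and then $u\in\pm D_m^0$ forces $dist(Au,\pm P_m)\le\gamma(\mu_m)\le\mu_m$, i.e. $Au\in\pm D_m^0$. To prove the estimate for $+P_m$ (the case of $-P_m$ being symmetric), put $v=Au$, let $\zeta$ be the metric projection of $v$ onto the closed convex cone $+P_m$, so that by the Moreau decomposition $\langle v-\zeta,\zeta\rangle=0$ and hence $\langle v,v-\zeta\rangle=\|v-\zeta\|^2=dist(v,+P_m)^2$, and test the Euler--Lagrange identity with $w=v-\zeta\in Y_m$. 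One then invokes $\phi_u\ge0$; the fact — forced by $(f_3)$, since $tf(x,t)\ge\mu F(x,t)>0$ for $t\ne0$ — that $f(x,\cdot)$ is sign-preserving, so that the negative part of $f(x,u)$ is supported on $\{u<0\}$; the bound $(f_2)$ near the origin together with the growth $(f_1)$, which together make the relevant Lebesgue norm of that negative part $o(\|u^-\|)$; and the elementary inequality $\|u^-\|\le dist(u,+P_m)$.

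The main obstacle is exactly this estimate. On the full space $H_0^1(\Omega)$ it is classical (test with $(Au)^-$ and use the maximum principle, i.e. the sign condition on $f$), but here one is forced to work on the finite-dimensional Galerkin space $Y_m$ — recall, as stressed in the Introduction, that the operator $A$ fails to be compact on an infinite-dimensional space — so $(Au)^-$ is not an admissible test function, and the metric projection onto $P_m$ does not coincide with the pointwise positive part; the discrepancy between $(Au)^-$ and its $Y_m$-component, as well as the $\phi_u$-term and the contribution of the positive part of $f(x,u)$, must be absorbed using the smallness of $dist(u,\pm P_m)$ — which is precisely where the choice of $\mu_m$ re-enters. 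This adaptation of the device of \cite{LiuWangZhang} to the Galerkin spaces is the technical heart of the lemma; it is harmless for $\gamma$, and for the resulting constants, to depend on $m$, since only one value $\mu_m$ is needed at each level $m$.
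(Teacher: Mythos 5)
Parts $(1)$ and $(2)$ of your proposal are correct and essentially coincide with the paper's argument: the boundedness estimate and the two inequalities of $(2)$ are obtained exactly as in the paper (test with $w=Au$, resp.\ $w=u-Au$, and use $\phi_u\geq0$, H\"older and Sobolev). For the continuity in $(1)$ you use a subsequence--uniqueness argument in the finite-dimensional space $Y_m$ where the paper runs a direct estimate on $\|Au_n-Au\|$ by testing with $w=Au_n-Au$; both routes work, and yours is arguably a little cleaner since it avoids the algebraic manipulation of the cross terms.

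Part $(3)$ is where there is a genuine gap. Your overall strategy (a contraction-type estimate $dist(Au,\pm P_m)\leq\gamma(dist(u,\pm P_m))$ with $\gamma(s)=o(s)$, followed by a choice of small $\mu_m$) is exactly the paper's, but the mechanism you propose for proving the estimate --- testing the Euler--Lagrange identity with $w=v-\zeta$, $\zeta$ the metric projection of $v=Au$ onto $+P_m$ --- does not deliver it. Writing $\int_\Omega\phi_u v(v-\zeta)\,dx=\int_\Omega\phi_u(v-\zeta)^2dx+\int_\Omega\phi_u\zeta(v-\zeta)\,dx$, the second term has no sign and is only of size $O\big(\|u\|^2\|v\|\,\|v-\zeta\|\big)$; likewise the part of $\int_\Omega f(x,u)(v-\zeta)\,dx$ coming from $\{u>0\}$ is only $O\big((1+\|u\|^{p-1})\|v-\zeta\|\big)$, since $v-\zeta$ has no pointwise sign that would let you discard it. After dividing by $\|v-\zeta\|=dist(v,+P_m)$ you are left with terms controlled by $\|u\|$ but \emph{not} by $dist(u,+P_m)$, so they cannot be ``absorbed using the smallness of $dist(u,\pm P_m)$'' as you assert, and no estimate of the form $\gamma(dist(u,+P_m))$ results. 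The paper instead tests with the pointwise part $w=v^{+}$ (for the invariance of $-D_m^0$; symmetrically $v^{-}$ for $+D_m^0$): then $\int_\Omega\phi_u v\,v^{+}dx=\int_\Omega\phi_u(v^{+})^2dx\geq0$ can be discarded, and the sign condition built into $(f_3)$ gives $v^{+}f(x,u)\leq v^{+}\big(\varepsilon u^{+}+c_\varepsilon(u^{+})^{p-1}\big)$ pointwise, whence $\|v^{+}\|\leq c\big(\varepsilon\,dist(u,-P_m)+c_\varepsilon\,dist(u,-P_m)^{p-1}\big)$ via $|u^{+}|_s\leq c\,dist(u,-P_m)$, and finally $dist(v,-P_m)\leq\|v^{+}\|$. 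Your observation that $v^{\pm}$ need not belong to $Y_m$ (so that neither the test function $v^{+}$ nor the bound $dist(v,-P_m)\leq\|v^{+}\|$ is automatic in the Galerkin setting) is a legitimate criticism of the paper's own write-up, but the Moreau-decomposition substitute you propose loses precisely the pointwise sign structure on which the whole computation rests, so it does not repair the step --- a different idea would be needed there.
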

\begin{proof}
(1)\quad Let $(u_{n})\subset Y_{m}$ be such that $u_{n}\to u$ in $Y_{m}$. We set $v_{n}=Au_{n}$ and $v=Au$. Using the definition of $A$, we obtain for any $w\in Y_{m}$
\begin{equation}\label{seven}
\left(a+b\|u_{n}\|^{2}\right)\int_{\Omega}\nabla v_{n}\nabla wdx+\int_{\Omega}\phi_{u_{n}}v_{n}wdx=\int_{\Omega}wf(x, u_{n})dx,
\end{equation}
\begin{equation}\label{eight}
\left(a+b\|u\|^{2}\right)\int_{\Omega}\nabla v\nabla wdx+\int_{\Omega}\phi_{u}vwdx=\int_{\Omega}wf(x, u)dx.
\end{equation}
By choosing $w=v_{n}-v$ in $(\ref{seven})$ and in $(\ref{eight})$, we obtain
\begin{multline*}
\left(a+b\|u_{n}\|^{2}\right)\|v_{n}-v\|^{2}= b\left(\|u_{n}\|^{2}-\|u\|^{2}\right)\int_{\Omega}\nabla v\nabla(v_{n}-v)dx\\
                                                                     + \int_{\Omega}(\phi_{u}v-\phi_{u_{n}}v_{n})(v_{n}-v)dx+\int_{\Omega}(v_{n}-v)(f(x, u_{n})-f(x, u))dx.
\end{multline*}
Observing that
\begin{multline*}
(\phi_{u}v-\phi_{u_{n}}v_{n})(v_{n}-v)=-\phi_{u_{n}}(v_{n}-v)^{2}+(\phi_{u_{n}}-\phi_{u})v(v-v_{n})\\
                                                                  \leq (\phi_{u_{n}}-\phi_{u})v(v-v_{n}),
\end{multline*}
we conclude, from H\"{o}lder inequality and Sobolev embedding theorem, that
\begin{multline*}
\left(a+b\|u_{n}\|^{2}\right)\|v_{n}-v\|^{2}\leq b\left| \|u_{n}\|^{2}-\|u\|^{2}\right|\|v\|\|v_{n}-v\|+ c_{1}|\phi_{u_{n}}-\phi_{u}|_{3}\|v\|\|v_{n}-v\|\\
+c_{2}\big| f(., u_n)-f(., u)\big|_{\frac{p}{p-1}}\|v_{n}-v\|
\end{multline*}
where $c_{1}$ and $c_{2}$ are positive constants. From $(f_{1})$ and Theorem A.2 in \cite{Willem96}, we have $f(., u_{n})-f(., u)\to 0$ in $L^{\frac{p}{p-1}}(\Omega)$. On the other hand, from definition of $\phi_{u_{n}}$ and $\phi_{u}$, we obtain $\phi_{u_{n}}-\phi_{u}\to 0$ in $L^{3}(\Omega)$. Hence, $v_{n}\to v$ in $Y_{m}$, showing that $A$ is continuous.

To finish this item, we observe that taking $v=w=Au$ in \eqref{eight} leads to
\begin{equation*}
\left(a+b\|u\|^{2}\right)\|Au\|^{2}+\int_{\Omega}\phi_{u}(Au)^{2}dx=\int_{\Omega}Au f(x, u)dx,
\end{equation*}
which implies, using $(f_1)$ and the Sobolev embedding theorem, that
\begin{equation*}
a\|Au\|\leq c_{2}(1+\|u\|^{p}),
\end{equation*}
where $c_{2}$ is a positive constant. Therefore, $Au$ is bounded whenever $u$ is bounded.\\
(2)\quad  Taking $w=u-Au$ in $(\ref{eight})$, it follows that
\begin{multline*}
\left(a+b\|u\|^{2}\right)\int_{\Omega}\nabla Au\nabla (u-Au)dx+\int_{\Omega}\phi_{u}Au(u-Au)dx=
\int_{\Omega}(u-Au) f(x, u)dx,
\end{multline*}

whence
\begin{equation*}
\langle J'_{m}(u), u-Au\rangle=\left(a+b\|u\|^{2}\right)\|u-Au\|^{2}+\int_{\Omega}\phi_{u}(u-Au)^{2}dx\geq a\|u-Au\|^{2}.
\end{equation*}
Moreover, using again $(\ref{eight})$, we have
\begin{eqnarray*}
\langle J'_{m}(u), w\rangle&=&\left(a+b\|u\|^{2}\right)\int_{\Omega}\nabla u\nabla wdx+\int_{\Omega}\phi_{u}uwdx-\int_{\Omega}wf(x, u)dx\\
                                               &=& \left(a+b\|u\|^{2}\right)\int_{\Omega}\nabla (u-Au)\nabla wdx+\int_{\Omega}\phi_{u}(u-Au)wdx.
\end{eqnarray*}
Applying H\"{o}lder inequality and Sobolev embedding theorem, we conclude that
\begin{equation*}
\|J'_{m}(u)\|\leq (a+ C\|u\|^{2})\|u-Au\|,
\end{equation*}
for some constant $C>b$.\\
(3)\quad From $(f_{1})$ and $(f_{2})$, for any $\varepsilon>0$ there exists $c_\varepsilon>0$ such that
\begin{equation}\label{festimate}
|f(x, t)|\leq \varepsilon|t|+c_\varepsilon|t|^{p-1}, \ \forall \ t\in \mathbb{R}.
\end{equation}
Let $u\in Y_{m}$ and $v=Au$. We denote $w^+=\max\{0, w\}$ and $w^-=\min\{0, w\}$, for any $w\in X$. Taking $w=v^{+}$ in $(\ref{eight})$ and using H\"{o}lder inequality, we obtain
\begin{align*}
\left(a+b\|u\|^{2}\right)\|v^{+}\|^{2}+\int_{\Omega}\phi_{u}(v^{+})^{2}dx&=\int_{\Omega}v^{+}f(x, u)dx\\
&\leq \varepsilon|u^{+}|_{2}|v^{+}|_{2}+c_\varepsilon|u^{+}|_{p}^{p-1}|v^{+}|_{p},
\end{align*} 
which implies
\begin{equation}\label{ten}
\|v^{+}\|^{2}\leq \frac{1}{a}\left(\varepsilon|u^{+}|_{2}|v^{+}|_{2}+c_\varepsilon|u^{+}|_{p}^{p-1}|v^{+}|_{p}\right).
\end{equation}
Since $|z^{+}|_{s}\leq |z-w|_{s}$, for all $z\in X$, $w\in -P$, and $1\leq s\leq 2^{\ast}$,
there exists a positive constant $c_1=c_1(s)$ such that $|u^{+}|_{s}\leq c_1dist(u, -P)\leq c_1dist(u, -P_{m})$.
 Note that $dist(v, -P_{m})\leq \|v^{+}\|$ and, consequently, by $(\ref{ten})$ and Sobolev embeddding theorem that
\begin{equation*}
dist(v, -P_{m})\|v^{+}\|\leq \|v^{+}\|^{2}\leq c_2\left( \varepsilon dist(u, -P_{m})+c_\varepsilon dist(u, -P_{m})^{p-1}\right),
\end{equation*}
where $c_2$ is a positive constant. Therefore,
\begin{equation*}
dist(v, -P_{m})\leq c_2\left(\varepsilon dist(u, -P_{m})+c_\varepsilon dist(u, -P_{m})^{p-1}\right).
\end{equation*}
In the same way, we can prove that
\begin{equation*}
dist(v, +P_{m})\leq c'_2\left( \varepsilon dist(u, +P_{m})+c_\varepsilon dist(u, +P_{m})^{p-1}\right).
\end{equation*}
where $c'_2$ is also a positive constant.\\
Hence
\begin{equation*}
dist(v, \pm P_{m})\leq c_3\left(\varepsilon dist(u, \pm P_{m})+c_\varepsilon dist(u, \pm P_{m})^{p-1} \right),
\end{equation*}
where $c_3=\max\{c_2,c_2'\}$. 
Fixing $\varepsilon$ small enough, we can choose $\mu_m$ such that 
\begin{equation}\label{mum}
0<\mu_m<\min\Big\{\delta_m,\frac{1}{m}\Big\},
\end{equation}
and 
\begin{equation*}
dist(v, \pm P_{m})\leq \frac{1}{2}dist(u, \pm P_{m})\quad\text{whenever}\quad dist(u, \pm P_{m})<\mu_{m}.
\end{equation*}
It then follows that $A(\pm D_{m}^{0})\subset \pm D_{m}^{0}$.
 \end{proof}

Using $\mu_{m}$ as above, we define
\begin{equation}\label{dmzero}
\pm D_{m}^{0}:=\{u\in Y_{m}: dist(u, \pm P_{m})<\mu_{m}\}, 
\end{equation}
\begin{equation*}
D_{m}=D_{m}^{0}\cup (-D_{m}^{0}) \ \text{ and } \ S_{m}:=Y_{m}\backslash D_{m}.
\end{equation*}
Observe that we can not yet insure that the vector field $A: Y_{m}\to Y_{m}$ is locally Lipschtiz continuous. However, as pointed out in \cite{LiuWangZhang}, we can argue as in \cite{BartschLiu04} and use $A$ to construct another vector field which satisfies the condition $(A_{0})$. More precisely, we have the following result.

\begin{lemma}\label{Lip}
There exists a locally Lipschitz continuous operator $B:E_{m}\to Y_{m}$ such that
\begin{enumerate}

\item[$(1)$] $\langle J'(u), u- Bu\rangle\geq \frac{1}{2}\|u-Au\|^{2}$, for any $u\in E_{m}$.

\item[$(2)$] $\frac{1}{2}\|u-Bu\|\leq \|u-Au\|\leq 2\|u-Bu\|$, for any $u\in E_{m}$.

\item[$(3)$] $B((\pm D_{m}^{0})\cap E_{m})\subset \pm D_{m}^{0}$.

\end{enumerate}
Moreover, if $A$ is odd then so is $B$.
\end{lemma}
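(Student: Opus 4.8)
The plan is to follow the classical smoothing argument of Bartsch--Liu \cite{BartschLiu04} (cf. \cite{LiuWangZhang}): since $A$ is only known to be continuous, we approximate it by a locally Lipschitz field via a locally finite partition of unity, while preserving the three estimates of Lemma \ref{field} up to harmless constants. A preliminary step is to strengthen Lemma \ref{field}(3) to the assertion $A\big(\overline{\pm D_m^0}\cap E_m\big)\subset \pm D_m^0$. Indeed, the proof of Lemma \ref{field}(3) in fact yields $dist(Au,\pm P_m)\le c_3\big(\varepsilon\,dist(u,\pm P_m)+c_\varepsilon\,dist(u,\pm P_m)^{p-1}\big)$ for every $u$; since $p>4>2$, once $\varepsilon$ and $\mu_m$ (as in \eqref{mum}) are small the right-hand side is $\le\tfrac12 dist(u,\pm P_m)<\tfrac12\mu_m$ whenever $dist(u,\pm P_m)\le\mu_m$, so $Au$ remains in the open set $\pm D_m^0$ even on its boundary.

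Next I would produce good local data. For $u\in E_m$ we have $u\neq Au$, Lemma \ref{field}(2) gives $\langle J'_m(u),u-Au\rangle\ge\|u-Au\|^2$, and of course $\langle u-Au,u-Au\rangle=\|u-Au\|^2$ and $\|u-Au\|\le 2\|u-Au\|$, all three with strict slack at $w=u$. Hence, by continuity of $A$ and $J'_m$, for each $u\in E_m$ there is an open neighborhood $N_u\subset E_m$ on which, for all $w\in N_u$: (a) $\langle J'_m(w),w-Au\rangle\ge\tfrac12\|w-Aw\|^2$; (b) $\langle w-Aw,w-Au\rangle\ge\tfrac12\|w-Aw\|^2$; (c) $\|w-Au\|\le 2\|w-Aw\|$; and, shrinking $N_u$ further, $N_u\cap(+D_m^0)=\emptyset$ whenever $u\notin\overline{+D_m^0}$ and $N_u\cap(-D_m^0)=\emptyset$ whenever $u\notin\overline{-D_m^0}$ (possible since $\pm D_m^0$ is open). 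Since $E_m$ is a metric space, pick a locally finite open refinement $\{N_i\}_{i\in I}$ with $N_i\subset N_{u_i}$ and a subordinate partition of unity $\{\pi_i\}$; taking $\pi_i=\psi_i/\sum_j\psi_j$ with $\psi_i(v)=dist(v,E_m\setminus N_i)$, each $\psi_i$ is Lipschitz and the denominator is locally a finite positive sum, so each $\pi_i$ is locally Lipschitz.

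Then set $Bu:=\sum_{i\in I}\pi_i(u)\,Au_i$; locally this is a finite sum of locally Lipschitz scalars times fixed vectors, so $B$ is locally Lipschitz on $E_m$. Property (1) follows from (a): $\langle J'_m(u),u-Bu\rangle=\sum_i\pi_i(u)\langle J'_m(u),u-Au_i\rangle\ge\tfrac12\|u-Au\|^2$. For (2), (c) gives $\|u-Bu\|\le\sum_i\pi_i(u)\|u-Au_i\|\le 2\|u-Au\|$, while (b) together with Cauchy--Schwarz gives $\tfrac12\|u-Au\|^2\le\sum_i\pi_i(u)\langle u-Au,u-Au_i\rangle=\langle u-Au,u-Bu\rangle\le\|u-Au\|\,\|u-Bu\|$, hence $\|u-Au\|\le 2\|u-Bu\|$. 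For (3), if $u\in(+D_m^0)\cap E_m$ and $\pi_i(u)>0$, then $u\in N_i$, so $N_i\cap(+D_m^0)\neq\emptyset$, which by the construction forces $u_i\in\overline{+D_m^0}\cap E_m$ and hence $Au_i\in +D_m^0$ by the first step; since $+D_m^0$ is convex, $Bu\in +D_m^0$, and symmetrically for $-$. Finally, if $J$ is even then $A$, $K_m$ and $E_m$ are symmetric and $u\mapsto -u$ interchanges $\pm D_m^0$; replacing $B$ by $\widehat Bu:=\tfrac12\big(Bu-B(-u)\big)$ yields an odd locally Lipschitz field inheriting (1)--(3): (3) from convexity of $\pm D_m^0$ and $-(-D_m^0)=+D_m^0$; (1) and the upper bound in (2) from the triangle inequality together with the oddness of $J'_m$ and $A$, which convert the ``$-u$''--terms into the corresponding estimates at $-u$; and the lower bound in (2) again from Cauchy--Schwarz applied to $\langle u-Au,u-\widehat Bu\rangle\ge\tfrac12\|u-Au\|^2$.

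The main obstacle is property (3): a naive partition-of-unity smoothing of $A$ need not respect the cones near $\partial(\pm D_m^0)$, because $Bu$ mixes values $Au_i$ at nearby centers $u_i$ which could lie just outside $\pm D_m^0$. This is exactly why one must first promote Lemma \ref{field}(3) to the closed sets $\overline{\pm D_m^0}$ — using the strict contraction of the distance to the cones that already dictated the smallness condition \eqref{mum} on $\mu_m$ — and then design the cover so that any patch meeting $+D_m^0$ is centered inside $\overline{+D_m^0}$. The remaining verifications are routine continuity-and-convexity bookkeeping.
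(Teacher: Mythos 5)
Your construction is correct and is precisely the Bartsch--Liu partition-of-unity smoothing that the paper itself invokes (its ``proof'' is just the citation to \cite{BartschLiu04}); in particular, your preliminary strengthening of Lemma \ref{field}(3) to $A\big(\overline{\pm D_m^0}\cap E_m\big)\subset \pm D_m^0$, together with centering the patches that meet $\pm D_m^0$ inside $\overline{\pm D_m^0}$ and using convexity of $\pm D_m^0$, is exactly what makes property $(3)$ survive the averaging. The verifications of $(1)$, $(2)$ and of the odd symmetrization are the standard ones and are carried out correctly.
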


{\bf Proof.} Follows the same steps from \cite{BartschLiu04}, see also \cite{Batkam1}.

\begin{remark}\label{rem1}
Lemmas $\ref{field}$ and $\ref{Lip}$ imply that
\begin{equation*}
\langle J'(u), u- Bu\rangle\geq \frac{1}{8}\|u-Au\|^{2}\quad\text{and}\quad \|J'_{m}(u)\|\leq 2(a+ C\|u\|^{2})\|u-Au\|.
\end{equation*}
\end{remark}

\begin{lemma}\label{lastcondition}
Let $\rho_{1}<\rho_{2}$ and $\alpha>0$. Then there exists $\beta>0$ such that $\|u-B(u)\|\geq\beta$ if $u\in Y_m$ is such that $J_m(u)\in[\rho_{1},\rho_{2}]$ and $\|J'_m(u)\|\geq\alpha$.
\end{lemma}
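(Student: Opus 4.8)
The statement is a boundedness-below estimate for $\|u-B(u)\|$ on a strip where the energy is bounded and the derivative is bounded away from $0$. The natural strategy is a contradiction-and-compactness argument, exploiting that we are working inside the \emph{finite-dimensional} space $Y_m$ (so bounded sets are relatively compact) and the inequalities already collected in Remark \ref{rem1}. Suppose the conclusion fails. Then there is a sequence $(u_n)\subset Y_m$ with $J_m(u_n)\in[\rho_1,\rho_2]$, $\|J'_m(u_n)\|\geq\alpha$, but $\|u_n-B(u_n)\|\to 0$. By Remark \ref{rem1}, $\|u_n-A(u_n)\|\leq 2\|u_n-B(u_n)\|\to 0$ as well.

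The first key step is to show $(u_n)$ is bounded in $Y_m$. This uses $(f_3)$ in the standard way: from the identity $\langle J'_m(u),u-Au\rangle=(a+b\|u\|^2)\|u-Au\|^2+\int_\Omega\phi_u(u-Au)^2\,dx$ (displayed in the proof of Lemma \ref{field}(2)) and the bound $\|J'_m(u_n)\|\leq 2(a+C\|u_n\|^2)\|u_n-A(u_n)\|$, one controls $|\langle J'_m(u_n),u_n\rangle|$ by a quantity that is small relative to $\|u_n\|^2$ (indeed $|\langle J'_m(u_n),u_n\rangle|\le|\langle J'_m(u_n),u_n-Au_n\rangle|+|\langle J'_m(u_n),Au_n\rangle|$, and by Lemma \ref{field}(2) the second term is bounded since $Au_n$ is bounded whenever $u_n$ is, via $a\|Au_n\|\le c_2(1+\|u_n\|^p)$ — one must be slightly careful here, so it is cleaner to estimate directly). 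Then the familiar combination
\begin{equation*}
\mu J_m(u_n)-\langle J'_m(u_n),u_n\rangle\geq\Big(\frac{\mu}{2}-1\Big)a\|u_n\|^2+\Big(\frac{\mu}{4}-1\Big)b\|u_n\|^4+\Big(\frac{\mu}{4}-1\Big)\int_\Omega\phi_{u_n}u_n^2\,dx,
\end{equation*}
with $\mu>4$, forces $\|u_n\|$ to stay bounded, since the left side is $\le\mu\rho_2+o(\|u_n\|^2)$.

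Once $(u_n)$ is bounded in the finite-dimensional space $Y_m$, pass to a subsequence with $u_n\to u$ in $Y_m$. Since $\|u_n-A(u_n)\|\to 0$ and $A$ is continuous (Lemma \ref{field}(1)), we get $u=A(u)$, i.e. $u\in K_m$. But $J'_m$ is continuous on $Y_m$, so $\|J'_m(u_n)\|\to\|J'_m(u)\|$; on one hand this limit is $\geq\alpha>0$, on the other hand $\|J'_m(u)\|\leq 2(a+C\|u\|^2)\|u-A(u)\|=0$ by Remark \ref{rem1}. This contradiction proves the lemma. The main obstacle is the boundedness step: one has to feed the weak control coming from $\|u_n-A(u_n)\|\to 0$ (rather than $\|J'_m(u_n)\|\to 0$) into the Ambrosetti--Rabinowitz computation, keeping track that the error term $\langle J'_m(u_n),u_n-Au_n\rangle$ is genuinely lower order because $\|u_n-Au_n\|\to 0$ while $\|J'_m(u_n)\|$ grows at most like $\|u_n\|^2$ — so the product is $o(\|u_n\|^2)$ only after noting $\|u_n-Au_n\|\to0$; combined with $\langle J'_m(u_n),Au_n\rangle$ being $O(\|u_n\|^{p})\cdot\|u_n-Au_n\|$, which is again absorbed. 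Everything else is a routine compactness wrap-up in finite dimensions.
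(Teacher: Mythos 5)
Your overall strategy --- argue by contradiction with a sequence $(u_n)$ satisfying $J_m(u_n)\in[\rho_1,\rho_2]$, $\|J'_m(u_n)\|\geq\alpha$ and $\|u_n-Bu_n\|\to0$, use an Ambrosetti--Rabinowitz combination to prove $(u_n)$ is bounded, then conclude from Remark \ref{rem1} that $J'_m(u_n)\to0$ --- is exactly the paper's. The gap is in the boundedness step, and it is not cosmetic. You need $|\langle J'_m(u_n),u_n\rangle|$ to be dominated by $a(\frac{\mu}{2}-1)\|u_n\|^2+b(\frac{\mu}{4}-1)\|u_n\|^4+(\frac{\mu}{4}-1)\int_\Omega\phi_{u_n}u_n^2\,dx$. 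The only upper bound available for $\|J'_m(u_n)\|$ is the one in Remark \ref{rem1}, $\|J'_m(u_n)\|\leq 2(a+C\|u_n\|^2)\|u_n-Au_n\|$, where $C$ is strictly positive \emph{even when} $b=0$ (the $C\|u\|^2$ factor comes from the term $\int_\Omega\phi_u(u-Au)w\,dx$, not from the Kirchhoff coefficient). Hence the ``direct estimate'' $|\langle J'_m(u_n),u_n\rangle|\leq\|J'_m(u_n)\|\,\|u_n\|$ only yields $o(\|u_n\|^3)$, not the $o(\|u_n\|^2)$ you assert; when $b=0$ (which the Main Theorem allows) this is not absorbed by $a\|u_n\|^2$, and you give no lower bound on $\int_\Omega\phi_{u_n}u_n^2\,dx$ in terms of $\|u_n\|^4$ that would rescue it. The splitting through $\langle J'_m(u_n),Au_n\rangle$ is worse: a term of size $O(\|u_n\|^{p})\|u_n-Au_n\|$ with $p\in(4,6)$ is \emph{not} ``absorbed'' by $\|u_n\|^2+\|u_n\|^4$, contrary to your parenthetical claim; it can be improved to $o(\|u_n\|^3)$ via $\|Au_n\|\leq\|u_n\|+\|u_n-Au_n\|$, but that still leaves the $b=0$ case open.

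The paper avoids this by never passing through $\|J'_m\|$: it tests the defining equation of $Au$ against $u$, replacing $\frac1\mu\int_\Omega u f(x,u)\,dx$ by $\frac1\mu(a+b\|u\|^2)\big(\|u\|^2-\int_\Omega\nabla u\nabla(u-Au)\,dx\big)+\frac1\mu\int_\Omega\phi_u u^2\,dx-\frac1\mu\int_\Omega\phi_u u(u-Au)\,dx$. The resulting error $\frac1\mu(a+b\|u\|^2)\|u\|\,\|u-Au\|$ carries $b$, not $C$, as the coefficient of $\|u\|^3$, so it is $o(\|u_n\|)$ when $b=0$ and $o(\|u_n\|^4)$ when $b>0$, in both cases lower order than the corresponding main term; the remaining cross term $\frac1\mu\int_\Omega\phi_u u(u-Au)\,dx$ can be absorbed into the nonnegative $(\frac14-\frac1\mu)\int_\Omega\phi_u u^2\,dx$ by Cauchy--Schwarz. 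You should redo the boundedness step along these lines. Your final compactness wrap-up in $Y_m$ is correct but unnecessary: once $(u_n)$ is bounded, Remark \ref{rem1} gives $\|J'_m(u_n)\|\leq 4(a+C\|u_n\|^2)\|u_n-Bu_n\|\to0$ directly, which already contradicts $\|J'_m(u_n)\|\geq\alpha$.
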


\begin{proof}
From the definition of operator $A$, it follows that for each $u\in Y_{m}$
\begin{equation*}
\left(a+b\|u\|^{2}\right)\int_{\Omega}\nabla Au\nabla udx+\int_{\Omega}\phi_{u}uAudx=\int_{\Omega}uf(x, u)dx .
\end{equation*}
Whence,
\begin{multline}
J_{m}(u)-\frac{1}{\mu}\left(a+b\|u\|^{2}\right)\int_{\Omega}\nabla u\nabla(u-Au)dx-\frac{1}{\mu}\int_{\Omega}\phi_{u}u(u-Au)dx=\\
a\left(\frac{1}{2}-\frac{1}{\mu}\right)\|u\|^{2}+b\left(\frac{1}{4}-\frac{1}{\mu}\right)\|u\|^{4}+\left(\frac{1}{4}-\frac{1}{\mu}\right)\int_{\Omega}\phi_{u}u^{2}dx\\
+\int_{\Omega}\left(\frac{1}{\mu}f(x, u)u-F(x, u)\right)dx.
\end{multline}

Using $(f_{3})$ and Lemma $\ref{Lip}(2)$, we have
\begin{align}\label{eleven}
\nonumber a\left(\frac{1}{2}-\frac{1}{\mu}\right)\|u\|^{2}+b\left(\frac{1}{4}-\frac{1}{\mu}\right)\|u\|^{4}
                                                                         &\leq |J_{m}(u)|+\frac{1}{\mu}\left(a+b\|u\|^{2}\right)\|u\|\|u-Au\|\\ 
                                                                         &\leq |J_{m}(u)|+\frac{2}{\mu}\left(a+b\|u\|^{2}\right)\|u\|\|u-Bu\|.
\end{align}
Suppose that there exists a sequence $(u_{n})\subset Y_{m}$ such that $J_{m}(u_{n})\in [\rho_{1}, \rho_{2}], \|J_{m}'(u_{n})\|\geq \alpha$ and $\|u_{n}-Bu_{n}\|\to 0$. From $(\ref{eleven})$, we conclude that $(u_{n})$ is bounded. Finally, by Remark $\ref{rem1}$ we derive $J_{m}'(u_n)\to 0$, which is a contradiction.
\end{proof}

\subsection{Existence of constant sign solutions}
In this subsection, we prove the existence of a positive solution and a negative solution to \eqref{2}.\\
The following lemma shows that $J_m$ has the mountain pass geometry.

\begin{lemma}\label{geometry}
There are $e^{\pm}\in \pm P_{2}$ and $r>0$ such that the condition $(A_{1})$ in Theorem $\ref{mountain}$ holds.
\end{lemma}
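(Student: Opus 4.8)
I need to verify condition $(A_1)$ of Theorem \ref{mountain}: exhibit $e^\pm \in \pm P_2$ and $r>0$ with $\|e^\pm\|>r$ and
\[
\rho := \inf_{\|u\|=r} J(u) > \delta := \max\{J(0), J(e^\pm)\}.
\]
Since $J(0)=0$, the real content is a positivity estimate near the origin (the ``mountain ring'') and a negativity estimate far out along a ray through $\pm P_2$ (the ``valley beyond the pass'').

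First, for the local estimate near the origin, I would use $(f_1)$ and $(f_2)$ in the form \eqref{festimate}: given $\ep>0$ there is $c_\ep>0$ with $|F(x,t)|\le \tfrac{\ep}{2}|t|^2 + \tfrac{c_\ep}{p}|t|^p$. Then, discarding the nonnegative terms $\tfrac{b}{4}\|u\|^4$ and $\tfrac14\int_\Omega \phi_u u^2$, and using the Sobolev embeddings $H_0^1(\Omega)\hookrightarrow L^2(\Omega)$ and $H_0^1(\Omega)\hookrightarrow L^p(\Omega)$ (valid since $4<p<6\le 2^*$ when $N=3$, and with no constraint when $N=1,2$), I get
\[
J(u) \ge \frac{a}{2}\|u\|^2 - \frac{\ep}{2}C_2^2\|u\|^2 - \frac{c_\ep}{p}C_p^p\|u\|^p
= \Big(\frac{a}{2} - \frac{\ep C_2^2}{2}\Big)\|u\|^2 - \frac{c_\ep C_p^p}{p}\|u\|^p.
\]
Choosing $\ep < a/C_2^2$ makes the quadratic coefficient positive, say equal to $a_0>0$; then since $p>2$, there is $r>0$ small with $a_0 r^2 - \tfrac{c_\ep C_p^p}{p} r^p =: \rho > 0$, and for $\|u\|=r$ we have $J(u)\ge \rho>0=J(0)$.

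Second, for the ray estimate, I would fix a nonzero $w^+ \in +P_2$ (for instance $w^+ = e_1$, the positive principal eigenfunction, which lies in $X_1\subset Y_2$) and study $t\mapsto J(tw^+)$ for $t>0$. Using $\phi_{tw^+} = t^2\phi_{w^+}$ from Lemma \ref{three}(ii),
\[
J(tw^+) = \frac{a t^2}{2}\|w^+\|^2 + \frac{b t^4}{4}\|w^+\|^4 + \frac{t^4}{4}\int_\Omega \phi_{w^+}(w^+)^2 - \int_\Omega F(x, tw^+).
\]
The key is that $(f_3)$ forces $F$ to grow faster than any power $t^\mu$ with $\mu>4$: integrating the inequality $\mu F(x,s)\le s f(x,s)$ one obtains $F(x,t)\ge F(x,1)\,t^\mu$ for $|t|\ge 1$ (with $F(x,1)>0$ for a.e.\ $x$ where $w^+\ne 0$, bounded below by a positive constant on a set of positive measure), hence $\int_\Omega F(x,tw^+) \ge c\, t^\mu$ for $t$ large with some $c>0$. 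Since $\mu>4$, the negative term dominates all the others and $J(tw^+)\to -\infty$ as $t\to\infty$. I then pick $t_0$ large enough that $t_0\|w^+\|>r$ and $J(t_0 w^+)<\rho$, and set $e^+ := t_0 w^+ \in +P_2$; similarly $e^- := -t_0 w^+ \in -P_2$ (or use $-e_1$ directly). This gives $\delta = \max\{0, J(e^\pm)\}<\rho$, which is $(A_1)$.

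The only mildly delicate point is the lower bound $\int_\Omega F(x, tw^+)\ge c t^\mu$: one must note that $(f_3)$ gives it pointwise where $w^+(x)>0$ and $t|w^+(x)|\ge 1$, and then restrict the integral to a subset $\{x: w^+(x)\ge \sigma\}$ of positive measure to produce a uniform constant. Everything else is a routine combination of Sobolev embeddings, the $(f_1)$–$(f_2)$ estimate \eqref{festimate}, and the homogeneity property $\phi_{tu}=t^2\phi_u$; I would present the two estimates as the two displayed chains of inequalities above and then simply choose $r$ and $t_0$.
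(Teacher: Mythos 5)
Your proposal is correct and follows essentially the same route as the paper: the small-sphere lower bound $J(u)\geq c\|u\|^{2}-c'\|u\|^{p}$ obtained from \eqref{festimate} and the Sobolev embedding (discarding the nonnegative terms $\frac{b}{4}\|u\|^{4}$ and $\frac14\int_\Omega\phi_u u^2$), combined with the superquadratic lower bound $F(x,t)\geq c_3|t|^\mu-c_4$ coming from $(f_3)$ and the homogeneity $\phi_{tu}=t^{2}\phi_u$ to drive $J(te_*^\pm)\to-\infty$ since $\mu>4$. The only cosmetic differences are that the paper fixes the explicit radius $r=(a/2c_2)^{1/(p-2)}$ and states the uniform constants $c_3,c_4$ directly (justified by continuity and compactness of $\overline\Omega\times\{|s|=1\}$) rather than restricting to a sublevel set of positive measure as you do.
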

\begin{proof}
Let $\sigma:=\inf\{|\nabla u|_2^2/|u|_2^2\,;\,u\in H_0^1(\Omega)\}$. For $\varepsilon=a\sigma/2$ in \eqref{festimate}, there exists $c_{1}>0$ such that 
\begin{equation*}
|F(x, t)|\leq \frac{a\sigma}{4}|t|^{2}+\frac{c_{1}}{p}|t|^{p}, \ \forall \ t\in\mathbb{R}.
\end{equation*}
The last inequality together with Lemma \ref{three}(ii) and Sobolev embedding theorem gives us
\begin{equation*}
J(u)\geq \frac{a}{4}\|u\|^{2}-\frac{c_{2}}{p}\|u\|^{p},
\end{equation*}
where $c_2>0$ is constant. Thus, by choosing $r=(a/2c_{2})^{\frac{1}{p-2}}>0$, we obtain 
\begin{equation*}
J(u)\geq\left(\frac{1}{2}-\frac{1}{p}\right)\left(\frac{a}{2}\right)^{p/(p-2)}\frac{1}{c_{2}^{2/(p-2)}}=:c_{\ast},
\end{equation*}
for all $u\in H_{0}^{1}(\Omega)$ with $\|u\|=r$.

On the other hand, from $(f_{3})$ there are positive constants $c_{3}$ and $c_{4}$ such that
\begin{equation*}
F(x, t)\geq c_{3}|t|^{\mu}-c_{4}.
\end{equation*}
Hence, fixing $e_{\ast}^{\pm}\in \pm P_{2}\backslash\{0\}$, it follows from Lemma \ref{three}-(ii) that
\begin{equation*}
J(te_{\ast}^{\pm})\leq\left(\frac{a}{2}\|e_{\ast}^{\pm}\|^2\right)t^{2}+\left(\frac{b}{4}\|e_{\ast}^{\pm}\|^4+\frac{1}{4}\int_\Omega\phi_{e_{\ast}^{\pm}}(e_{\ast}^{\pm})^2\right)t^{4}-\left(c_3|e_{\ast}^{\pm}|^\mu_\mu\right)t^{\mu}+c_4|\Omega|,
\end{equation*}
for all $t>0$. Since $\mu>4$, we can choose $t_{\ast}>0$ large enough such that, defining $e^{\pm}:=t_{\ast}e_{\ast}^{\pm}$, we have $J(e^{\pm})<0$ and $\|e^{\pm}\|>r$. This shows that
\begin{equation*}
\rho=\inf_{u\in X\atop_{\|u\|=r}}J(u)\geq c_{\ast}>0=\max\{J(0), J(e^{\pm})\}.
\end{equation*}
\end{proof}

\begin{proof}[{\bf Proof of Main Theorem (Part 1).}]
  By Lemmas $\ref{geometry}$, $\ref{Lip}$ and $\ref{lastcondition}$, and Remark $\ref{rem1}$ the conditions $(A_{0})$ and $(A_{1})$ of Theorem $\ref{mountain}$ are satisfied.

 Applying Theorem $\ref{mountain}$ we find sequences $\{u_{m,n}^\pm\}_n\subset \overline{\pm D^0_m}$ such that 
\begin{equation*}
\lim_{\substack{n\to\infty}}J'_m(u^{\pm}_{m,n})=0\,\,\text{ and }\,\, \lim_{\substack{n\to\infty}}J(u^{\pm}_{m,n})\in \big[\rho,\max_{t\in[0,1]}J(te^\pm)\big].
\end{equation*}
For any $u\in Y_m$ we have, in view of $(f_3)$, Lemma \ref{three}(ii) and since $\mu>4$, 
\begin{equation}\label{pss}
 J(u)-\frac{1}{\mu}\big<J_m'(u),u\big> \geq a\left(\frac{1}{2}-\frac{1}{\mu}\right)\|u\|^2+b\left(\frac{1}{4}-\frac{1}{\mu}\right)\|u\|^4.
\end{equation}
It follows from \eqref{pss} that the sequences $\{u_{m,n}^\pm\}_n$ are bounded in $Y_m$. Since $Y_m$ is of finite dimension we have, up to subsequences, $u_{m,n}^\pm\to u_m^\pm$ in $Y_m$ as $n\to\infty$. Since $\overline{\pm D^0_m}$ is closed and $J_m$ is smooth, it follows that 
\begin{equation*}
u_{m}^\pm\in \overline{\pm D^0_m},\quad J_m'(u_m^\pm)=0,\quad\text{and}\quad J(u^{\pm}_{m})\in \big[\rho,\max_{t\in[0,1]}J(te^\pm)\big]
\end{equation*}
Inequality \eqref{pss} above then implies that the sequences $\{u_m^\pm\}_m$ are bounded in $X$. Going to subsequences if necessary we can assume that $u_m^\pm\rightharpoonup u^\pm$ in $X$ and $u_m^\pm\to u^\pm$ in $L^p(\Omega)$ and in $L^q(\Omega)$, as $m\to\infty$. We denote by $\pi_m$ the orthogonal projection of $X$ onto $Y_m$. We have
\begin{multline}\label{eee}
\big<J_m'(u_m^\pm),u_m^\pm-\pi_m u^\pm\big>=\big(a+b\|u_m^\pm\|^2\big)\big<u_m^\pm,u_m^\pm-\pi_{m}u^\pm\big>+\int_\Omega \phi_{u_m^\pm}u_m^\pm(u_m^\pm-\pi_m u^\pm)dx \\
-\int_\Omega \big(u_m^\pm-\pi_mu\big)f(x,u_m^\pm)dx.
\end{multline}
The H\"{o}lder inequality gives
\begin{equation*}
\big|\int_\Omega \big(u_m^\pm-\pi_mu\big)f(x,u_m^\pm)dx\big|\leq|u_m^\pm-\pi_mu^\pm|_p|f(x,u_m^\pm|_{\frac{p}{p-1}}
\end{equation*}
\begin{equation*}
\big|\int_\Omega \phi_{u_m^\pm}u_m^\pm(u_m^\pm-\pi_m u^\pm)dx \big|\leq|\phi_{u_m^\pm}|_3|u_m^\pm|_3|u_m^\pm-\pi_m u^\pm|_3.
\end{equation*}
Since the sequences $\{u_m^\pm\}_m$ are bounded, we deduce from $(f_1)$ and Lemma \ref{three}(i) that $\{\big|f(x,u_m^\pm)\big|_{\frac{p}{p-1}}\}_m$ and $\{\big|\phi_{u_m^\pm}\big|_3\}_m$ are bounded. One can then easily deduce from \eqref{eee}, since $J_m'(u_m^\pm)=0$ and $\pi_mu^\pm\to u^\pm$ in $X$, that $u_m^\pm\to u^\pm$ in $X$ as $m\to\infty$. It is easy to see that $u^\pm$ are critical points of $J$. \\
It remains to show that $u^+$ is positive and that $u^-$ is negative.\\
 We first remark that $J(u^\pm)\geq\rho>0$ implies that $u^\pm\neq0$. \\
Now, recall the definition \eqref{dmzero} of $\pm D^0_m$. Since $u_m^\pm\in \overline{\pm D^0_m}$ and $\pm P_m\subset \pm P$, we have 
\begin{equation}\label{estimate}
dist(u_m^\pm,\pm P)\leq dist(u_m^\pm,\pm P_m)\leq \mu_m,
\end{equation}
where $\mu_m$ is given by \eqref{mum}. Since $\mu_m\to0$ as $m\to\infty$, we conclude by taking the limit $m\to\infty$ in \eqref{estimate} that $u^\pm\in\pm P$.
\end{proof}

\subsection{Existence of a sign-changing solution}
We show in this subsection that \eqref{2} has a sign-changing solution of mountain pass type under assumptions $(f_{1,2,3})$. The next two lemmas will be very helpful.
\begin{lemma}\label{kappa}
For $q\in[2,6]$ there exists $\kappa_q>0$ independent of $\mu_m$ such that 
\begin{equation*}
|u|_q\leq \kappa_q\mu_m,\quad \forall u\in D^0_m\cap(-D_m^0).
\end{equation*}
\end{lemma}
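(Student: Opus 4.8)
The plan is to exploit the fact that any $u$ lying simultaneously in $D^0_m$ and $-D^0_m$ is $\mu_m$-close, in the $H^1_0$-norm, to both $+P_m$ and $-P_m$; this forces $u$ to be small in $H^1_0(\Omega)$, and the desired $L^q$-bound then follows from the Sobolev embedding. Concretely, fix $u\in D^0_m\cap(-D^0_m)$. By definition of $\pm D^0_m$ there exist $v^+\in +P_m$ and $v^-\in -P_m$ with $\|u-v^+\|<\mu_m$ and $\|u-v^-\|<\mu_m$. The first step is to observe that the positive part $u^+$ and the negative part $u^-$ are controlled by these distances: since $u^-\leq 0$ and $v^+\geq 0$ we have $|u^-|\leq |u-v^+|$ pointwise, hence $\|u^-\|_{L^2}\leq |u-v^+|_2\leq c\,\|u-v^+\|<c\,\mu_m$ by the Sobolev (Poincaré) inequality; symmetrically $\|u^+\|_{L^2}\leq c\,\mu_m$ using $v^-$. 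Actually, to get the $H^1_0$-norm rather than just the $L^2$-norm, I would instead use $\mathrm{dist}(u,+P)\le\mathrm{dist}(u,+P_m)<\mu_m$ together with the standard fact (already invoked in the proof of Lemma \ref{field}(3)) that $\|u^-\|\le \mathrm{dist}(u,+P)$ in $H^1_0(\Omega)$, and symmetrically $\|u^+\|\le \mathrm{dist}(u,-P)<\mu_m$.

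Combining the two bounds and using $\|u\|^2=\|u^+\|^2+\|u^-\|^2$ gives $\|u\|\leq \sqrt{2}\,\mu_m$. Then for any $q\in[2,6]$ the Sobolev embedding $H^1_0(\Omega)\hookrightarrow L^q(\Omega)$ (valid for $N\le 3$) yields a constant $S_q>0$, depending only on $q$ and $\Omega$, with $|u|_q\leq S_q\|u\|\leq \sqrt{2}\,S_q\,\mu_m$. Setting $\kappa_q:=\sqrt{2}\,S_q$ finishes the proof, and $\kappa_q$ is manifestly independent of $m$ (hence of $\mu_m$), since $S_q$ is the embedding constant on $\Omega$.

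The only point requiring a little care — the ``main obstacle,'' such as it is — is justifying the pointwise/norm inequality $\|u^+\|_{H^1_0}\le \mathrm{dist}(u,-P)$. This is the assertion that the metric projection onto the closed convex cone $-P=\{w\in H^1_0(\Omega): w\le 0\}$ sends $u$ to $u^-$, equivalently that $u\mapsto u^+$ is the distance-to-$(-P)$ map; this is a classical fact for the lattice structure of $H^1_0(\Omega)$ and is exactly the estimate used implicitly in Lemma \ref{field}(3) (where $|u^+|_s\le c_1\,\mathrm{dist}(u,-P_m)$ appears). One should note that $\pm P_m\subset \pm P$, so $\mathrm{dist}(u,\pm P)\le\mathrm{dist}(u,\pm P_m)$, which is what lets us pass from the finite-dimensional cones to the full cones and keep the constant independent of $m$. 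Everything else is a one-line application of Sobolev embedding.
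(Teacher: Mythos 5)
The key step of your argument --- the inequality $\|u^+\|_{H^1_0}\le \mathrm{dist}_{H^1_0}(u,-P)$, i.e.\ the assertion that $u\mapsto u^-$ is the metric projection of $u$ onto the cone $-P$ for the Dirichlet inner product --- is false in general, and it is precisely the step on which your intermediate bound $\|u\|\le\sqrt{2}\,\mu_m$ and the final Sobolev step rest. Since $u^-\in -P$, the trivial inequality goes the other way: $\mathrm{dist}(u,-P)\le\|u-u^-\|=\|u^+\|$. For $u^-$ to actually be the projection one would need the variational inequality $\langle u-u^-,\,v-u^-\rangle\le 0$ for all $v\in -P$, which (using $\langle u^+,u^-\rangle=0$) amounts to $\int_\Omega\nabla u^+\nabla w\,dx\ge 0$ for every $w\in H^1_0(\Omega)$ with $w\ge0$, i.e.\ to $u^+$ being superharmonic; a generic $u$ fails this, so $\|u^+\|$ can strictly exceed $\mathrm{dist}(u,-P)$. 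The estimate from the proof of Lemma~\ref{field}(3) that you cite as precedent is an $L^s$ estimate, $|u^+|_s\le c_1\,\mathrm{dist}(u,-P_m)$, obtained from the pointwise bound $|u^+|\le|u-v|$ followed by the Sobolev embedding applied to $u-v$; it is not an $H^1_0$-norm bound on $u^+$. Indeed, membership in $D^0_m\cap(-D^0_m)$ forces $u$ to be small only in $L^q$, not in $H^1_0$ with constants uniform in $m$, so the conclusion $\|u\|\le\sqrt{2}\,\mu_m$ cannot be salvaged.

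The repair is simply to keep your first, discarded computation and run it in $L^q$ rather than $L^2$: for every $v\in +P_m$ one has $|u^-|\le|u-v|$ pointwise, hence $|u^-|_q\le|u-v|_q\le\kappa_q\|u-v\|$ by the Sobolev embedding, and taking the infimum over $v$ gives $|u^-|_q\le\kappa_q\,\mathrm{dist}(u,+P_m)<\kappa_q\mu_m$; symmetrically $|u^+|_q<\kappa_q\mu_m$, and therefore $|u|_q\le|u^+|_q+|u^-|_q<2\kappa_q\mu_m$, with $\kappa_q$ the embedding constant, independent of $m$. This is exactly the paper's proof.
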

\begin{proof}
This follows from the fact that there exists $\kappa_q>0$ such that for any $u\in Y_m$
\begin{equation*}
|u^\pm|_q=\inf_{v\in \mp P_m}|u-v|_q\leq \kappa_q\inf_{v\in \mp P_m}\|u-v\|=\kappa_q dist(u,\mp P_m).
\end{equation*}
\end{proof}
\begin{lemma}\label{cstar}
For $m$ large enough we have 
\begin{equation*}
J(u)\geq \frac{a}{8}\mu_m^2\quad\text{for}\quad u\in \partial(D_m^0)\cap \partial(-D_m^0).
\end{equation*}
\end{lemma}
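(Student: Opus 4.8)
The claim is a lower bound on $J$ over the set $\partial(D_m^0)\cap\partial(-D_m^0)$, which consists of functions $u\in Y_m$ with $\mathrm{dist}(u,+P_m)=\mu_m$ and $\mathrm{dist}(u,-P_m)=\mu_m$ simultaneously. The natural strategy is to bound $J$ from below by its quadratic part minus the superlinear part, and then exploit Lemma~\ref{kappa}, which tells us that on $D_m^0\cap(-D_m^0)$ every $L^q$-norm of $u$ ($2\le q\le 6$) is controlled by $\mu_m$. The point is that on this ``thin'' set the function $u$ is small in every $L^q$, so the nonlinear term $\int_\Omega F(x,u)$ is negligible compared with $\|u\|^2$, while $\|u\|$ itself is comparable to $\mu_m$ from below.

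First I would record, from $(f_1)$--$(f_2)$ (equivalently \eqref{festimate}) the estimate $|F(x,t)|\le \varepsilon|t|^2 + c_\varepsilon|t|^p$ with $4<p<6$, so that
\begin{equation*}
\int_\Omega F(x,u)\,dx \le \varepsilon|u|_2^2 + c_\varepsilon|u|_p^p \le \varepsilon\kappa_2^2\mu_m^2 + c_\varepsilon\kappa_p^p\mu_m^p,
\end{equation*}
using Lemma~\ref{kappa} for both terms. Since $\phi_u\ge0$ (Lemma~\ref{three}(ii)), and $a,b\ge0$, we get
\begin{equation*}
J(u) \ge \frac{a}{2}\|u\|^2 - \varepsilon\kappa_2^2\mu_m^2 - c_\varepsilon\kappa_p^p\mu_m^p.
\end{equation*}
Next I need a lower bound on $\|u\|$. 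For $u\in\partial(D_m^0)$ we have $\mathrm{dist}(u,+P_m)=\mu_m$; since $0\in +P_m$, this forces $\|u\|\ge\mathrm{dist}(u,+P_m)=\mu_m$. Hence $\|u\|^2\ge\mu_m^2$, and
\begin{equation*}
J(u) \ge \Big(\frac{a}{2} - \varepsilon\kappa_2^2 - c_\varepsilon\kappa_p^{p}\mu_m^{p-2}\Big)\mu_m^2.
\end{equation*}
Now fix $\varepsilon$ so small that $\varepsilon\kappa_2^2 \le a/8$ (note $\kappa_2$ does not depend on $\mu_m$ or on the choice of $\varepsilon$), which fixes $c_\varepsilon$; then choose $m$ large, so that $\mu_m<1/m$ (recall \eqref{mum}) is small enough that $c_\varepsilon\kappa_p^p\mu_m^{p-2}\le a/8$. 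Since $p>4>2$, the exponent $p-2>0$ and $\mu_m^{p-2}\to0$, so this is possible. With these choices $\frac{a}{2}-\varepsilon\kappa_2^2-c_\varepsilon\kappa_p^p\mu_m^{p-2}\ge \frac{a}{2}-\frac{a}{8}-\frac{a}{8}=\frac{a}{4}\ge\frac{a}{8}$, giving $J(u)\ge\frac{a}{8}\mu_m^2$ as claimed.

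\textbf{Main obstacle.} The only delicate point is keeping the logical order of the quantifiers straight: $\varepsilon$ must be chosen first (depending only on $a$ and $\kappa_2$), which then determines $c_\varepsilon$ and $\kappa_p$, and only afterwards is $m$ taken large enough that the residual term $c_\varepsilon\kappa_p^p\mu_m^{p-2}$ is absorbed. Because $\kappa_2,\kappa_p$ are independent of $\mu_m$ (this is exactly the content of Lemma~\ref{kappa}) there is no circularity, but one should state this explicitly. A minor subtlety worth a sentence is the lower bound $\|u\|\ge\mu_m$ on $\partial(D_m^0)$: it uses only that $+P_m$ is a cone containing $0$, so $\mathrm{dist}(u,+P_m)\le\|u-0\|=\|u\|$. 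Everything else is routine.
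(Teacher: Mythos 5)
Your proof is correct and follows essentially the same route as the paper's: bound $J(u)\ge\frac{a}{2}\|u\|^2-\int_\Omega F(x,u)\,dx$ using $\phi_u\ge0$, control $\int_\Omega F(x,u)\,dx$ via \eqref{festimate} and Lemma \ref{kappa}, note $\|u\|\ge\mu_m$ on the boundary set, and absorb the $\mu_m^p$ remainder for $m$ large since $\mu_m\to0$. The only cosmetic difference is that the paper gets $\|u\|\ge\mu_m$ from $\|u^\pm\|\ge \mathrm{dist}(u,\mp P_m)$ rather than from $0\in\pm P_m$; both work.
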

\begin{proof}
 Let $u\in \partial(D_m^0)\cap \partial(-D_m^0)$. It is clear that $\|u^\pm\|\geq dist(u,\mp P_m)=\mu_m$. Taking $\varepsilon=\frac{a}{2\kappa_2^2}$ in \eqref{festimate} and using Lemma \ref{kappa}, we see that
\begin{equation*}
J(u)\geq\frac{a}{2}\|u\|^2-\int_\Omega F(x,u)dx\geq \frac{a}{4}\mu_m^2-c\mu_m^p,
\end{equation*}
where $c>0$ is a constant. We conclude by using the fact that $\mu_m\to0$ as $m\to\infty$.
\end{proof}
In the following proof we adopt the notations of Theorem \ref{scmpt}.
\begin{proof}[{\bf Proof of Main Theorem  (Part 2).} ]
 We first follow \cite{LiuWangZhang} to verify the assumptions of Theorem \ref{scmpt}. We define the continuous map $\varphi_0:\Delta\to Y_m$ by $\varphi_0(s,t)=R(se_2^-+te_2^+)$ for all $(s,t)\in\Delta$, where $e_2$ is an eigenfunction corresponding to the second eigenvalue of the Laplacian, $e_2^-=\min\{e_2,0\}$, $e_2^+=\max\{e_2,0\}$, and $R>0$ is a constant to be determined later. Since $e_2$ is sign-changing, $e_2^\pm$ are not equal to $0$. Obviously, $\varphi_0(0,t)\in D_m^0$ and $\varphi_0(s,0)\in -D_m^0$. \\
Now a simple computation shows that $\delta:=\min\{|(1-t)e^{-}_2+te_2^+|_2\, :\,t\in[0,1]\}>0$. Then, $|u|_2\geq\delta R$ for $u\in\varphi_0(\partial_0\Delta)$ and it follows from Lemma \ref{kappa} that $\varphi_0(\partial_0\Delta)\cap D_m^0\cap(-D_m^0)=\emptyset$ for $R$ large enough. \\
Recall that $(f_3)$ implies $F(x,u)\geq c_1|u|^\mu-c_2$ for some positive constants $c_1$ and $c_2$. Then we obtain using Lemma \ref{three}(i)
\begin{equation*}
J(u)\leq \frac{a}{2}\|u\|^2+C_1\|u\|^4-C_2|u|_\mu^\mu+C_3,
\end{equation*}
where $C_{1,2,3}$ are positive constants. This inequality together with Lemma \ref{cstar} implies that, for $m$ and $R$ large enough,
\begin{equation*}
\sup_{u\in\varphi_0(\partial_0\Delta)}J(u)<0<c_m^\star.
\end{equation*}
We have then shown that the assumptions of Theorem \ref{scmpt} are satisfied. Hence, applying Theorem \ref{scmpt} we find, for $m$ large enough, a sequence $(u_m^n)_n\subset V_{\frac{\mu_m}{2}}(S_m)$ such that
\begin{equation*}
\lim_{\substack{n\to\infty}}J'_m(u_m^n)=0\quad\text{and}\quad \lim_{\substack{n\to\infty}}J(u_m^n)\in\big[c_0,\sup_{u\in\varphi_0(\Delta)} J(u)\big].
\end{equation*}
We can proceed as in the proof of Part 1 to show that, up to a subsequence, $u_m^n\to u_m$ in $Y_m$ as $n\to\infty$ and that $u_m\in V_{\frac{\mu_m}{2}}(S_m)$, $J'_m(u_m)=0$, and the sequence $(u_m)_m$ converges, up to a subsequence, to a critical point $u$ of $J$ as $m\to\infty$. To see that $u$ is sign-changing, we first observe that
\begin{equation*}
\big<J'_m(u_{m}),u_{m}^{\pm}\big>=0\,\,\Rightarrow\,\, a\|u_{m}^\pm\|^2\leq\int_\Omega u_{m}^\pm f(x,u_{m}^\pm).
\end{equation*}
 We recall that $(f_1)$ and $(f_2)$ imply 
\begin{equation*}
\forall\varepsilon>0,\quad\exists c_\varepsilon>0\,\,;\,\, |f(x,t)|\leq \varepsilon|t|+c_\varepsilon|t|^{p-1},\quad\forall (x,t)\in\Omega\times\mathbb{R}.
\end{equation*}
We then obtain by using the Sobolev embedding theorem
\begin{equation*}
a\|u_{m}^\pm\|^2\leq \int_\Omega u_{m}^\pm f(x,u_{m}^\pm)\leq c\big(\varepsilon\|u_{m}^\pm\|^2+c_\varepsilon\|u_{m}^\pm\|^p\big),
\end{equation*} 
for some constant $c>0$.  Since $u_{m}$ is sign-changing, $u_{m}^\pm$ are not equal to $0$. Choosing $\varepsilon$ small enough it follows that $\|u_{m}^\pm\|\geq \alpha>0$, where $\alpha$ does not depend on $m$. By passing to the limit $m\to\infty$, we conclude that $u$ is sign-changing.
\end{proof}
 
\subsection{Existence of high energy sign-changing solutions}
In this subsection, we show that \eqref{1} has infinitely many large energy sign-changing solutions by applying Theorem \ref{fountaintheorem}. We recall the definition of $Y_k$ and $Z_k$ ($k\geq2$ ). 
\begin{equation*}
  Y_k=\overline{\oplus_{j=1}^k X_j}\quad \text{and}\quad Z_k=\overline{\oplus_{j=k}^\infty X_j},
\end{equation*}
where $X_j$ designates, as above, the eigenspace corresponding to the $j$th eigenvalue of the Laplacian.
\begin{proof}[{\bf Proof of Main Theorem (Part 3).}]
 Since the argument is the same as in \cite{Batkam1,Batkam2}, we just provide a sketch here.
 
Using $(f_1)$ and Lemma \ref{three}(ii), we obtain
\begin{equation*}
J(u)\geq \frac{a}{2}\|u\|^2-c_1|u|_p^p-c_2, \quad\forall u\in X,
\end{equation*}
where $c_1,c_2>0$ are constant. It then follows that for any $u\in Z_k$ such that
\begin{equation*}
\|u\|=r_k:=\left(\frac{c_1}{a}p\beta_k^p\right)^{\frac{1}{2-p}},
\end{equation*} 
we have 
 \begin{equation*}
 J(u)\geq a\left(\frac{1}{2}-\frac{1}{p}\right)\left(\frac{c_1}{a}p\beta_k^p\right)^{\frac{2}{2-p}}-c_2,
 \end{equation*}
where
\begin{equation*}
\beta_k:=\sup_{\substack{v\in Z_k\\\|v\|=1}}|v|_p.
\end{equation*}
 Then we obtain
By Lemma $3.8$ in \cite{Willem96}, $\beta_k\to0$ and then $r_k\to\infty$, as $k\to\infty$. 

On the other hand, using the fact that $Y_k$ is finite-dimensional one can easily verify that $J(u)\to-\infty$, as $\|u\|\to\infty$, $u\in Y_k$. Therefore, for $k$ big enough we can choose $\rho_k>r_k$ such that the conditions of Theorem \ref{fountaintheorem} are satisfied. We can then proceed as in \cite{Batkam1,Batkam2} to show that $J$ has a sequence $\{u_k\}$ of sign-changing critical points such that $J(u_k)\to\infty$, as $k\to\infty$.
\end{proof}

\vskip0.5cm \noindent {\bf Acknowledgements.}  CJB was supported by The Fields Institute for Research in Mathematical Sciences and The Perimeter Institute for Theoretical Physics through a Fields-Perimeter Africa Postdoctoral Fellowship.

\end{document}